\definecolor{darkgreen}{rgb}{0.1,0.7,0.1}
\definecolor{airforceblue}{rgb}{0.36, 0.54, 0.66}
\newtheorem{theorem}{Theorem}
\newtheorem{lemma}{Lemma}[section]
\newtheorem{proposition}[lemma]{Proposition}
\newtheorem{conjecture}[lemma]{Conjecture}
\newtheorem{remark}[lemma]{Remark}
\newtheorem{definition}[lemma]{Definition}
\newcommand{\bn}[1]{{[\kern-0.5ex] #1 
    [\kern-0.5ex]}}
\newcommand{\eps}{\varepsilon}
\newcommand\symb[2][\bf]{{\mathchoice{\hbox{#1#2}}{\hbox{#1#2}}%
        {\hbox{\scriptsize#1#2}}{\hbox{\tiny#1#2}}}}
\newcommand{\crochet}[1]{\left\langle #1 \right\rangle}
\newcommand{\norm}[1]{\left\| #1 \right\|}
\newcommand{\normm}[1]{{\left\vert\!\left\vert\!\left\vert #1 \right\vert\!\right\vert\!\right\vert}}
\def\R{{\symb R}}
\def\N{{\symb N}}
\def\Z{{\symb Z}}
\def\P{{\symb P}}
\def\un{\mathbf{1}}
\def\${|\!|\!|}
\newcommand{\cc}{\complement}
\renewcommand{\P}{\mathbb{P}}
\newcommand{\E}{\mathbb{E}}
\newcommand{\cA}{\mathcal{A}}
\newcommand{\cC}{\mathcal{C}}
\newcommand{\cF}{\mathcal{F}}
\newcommand{\cG}{\mathcal{G}}
\newcommand{\cH}{\mathcal{H}}
\newcommand{\cI}{\mathcal{I}}
\newcommand{\cM}{\mathcal{M}}
\newcommand{\cT}{\mathcal{T}}
\newcommand{\cU}{\mathcal{U}}
\newcommand{\ccB}{\mathscr{B}}
\newcommand{\ccD}{\mathscr{D}}
\begin{document}

\title{Asymptotic of the smallest eigenvalues of the\\ continuous Anderson Hamiltonian in $d\le 3$}
\author{Yueh-Sheng Hsu\footnote{Universit\'e Paris-Dauphine, PSL University, CNRS, UMR 7534, CEREMADE, 75016 Paris, France. \email{hsu@ceremade.dauphine.fr}}\; and Cyril Labb\'e\footnote{Universit\'e de Paris, Laboratoire de Probabilit\'es, Statistiques et Mod\'elisation, UMR 8001, F-75205 Paris, France \email{clabbe@lpsm.paris}}}

\vspace{2mm}

\date{\today}
\maketitle

\begin{abstract}
We consider the continuous Anderson Hamiltonian with white noise potential on $(-L/2,L/2)^d$ in dimension $d\le 3$, and derive the asymptotic of the smallest eigenvalues when $L$ goes to infinity. We show that these eigenvalues go to $-\infty$ at speed $(\log L)^{1/(2-d/2)}$ and identify the prefactor in terms of the optimal constant of the Gagliardo-Nirenberg inequality. This result was already known in dimensions $1$ and $2$, but appears to be new in dimension $3$. We present some conjectures on the fluctuations of the eigenvalues and on the asymptotic shape of the corresponding eigenfunctions near their localisation centers.
	
	\medskip
	
	\noindent
	{\bf AMS 2010 subject classifications}: Primary 35J10, 60H15; Secondary 47A10. \\
	\noindent
	{\bf Keywords}: {\it Anderson Hamiltonian; regularity structures; white noise; Schr\"odinger operator; Gagliardo-Nirenberg inequality.}
\end{abstract}

\setcounter{tocdepth}{2}
\tableofcontents

\section{Introduction}

Given a white noise $\xi$ on $\R^d$, we consider the truncated continuous Anderson Hamiltonian
$$ \cH_L := -\Delta + \xi\;,\quad \mbox{ on } Q_L := (-L/2,L/2)^d\;,$$
where $\Delta$ is the continuous Laplacian, boundary conditions are taken to be homogeneous Dirichlet and the dimension $d$ is either $1$, $2$ or $3$.\\

This operator belongs to the class of random Schr\"odinger operators. The particularity of the present setting is the singularity of the white noise potential, which is only distribution-valued. In dimension $1$, the operator $\cH_L$ can be defined with standard tools and rather complete results are available on the asymptotic behaviour as $L\to\infty$ of its eigenvalues and eigenfunctions, see~\cite{McKean,DL19,DLCritical,DLCrossover}.\\

On the other hand, the mere definition of the operator in dimensions $2$ and $3$ is a priori unclear. Indeed, the regularity of white noise is too low for the operator to be defined by classical arguments, and it actually needs to be \emph{renormalised} by infinite constants. New techniques in the field of stochastic PDEs have provided the appropriate tools to carry out such a construction. Building on the paracontrolled calculus of Gubinelli, Imkeller and Perkowski~\cite{GIP15}, Allez and Chouk~\cite{AllezChouk} constructed $\cH_L$ in dimension $2$ under periodic b.c. This construction was extended to dimension $3$ under periodic b.c.~in~\cite{GUZ}, under Dirichlet b.c.~in dimension $2$ in~\cite{CvZ19} and to $2$-dimensional manifolds in~\cite{Mouzard}. On the other hand, a construction under periodic and Dirichlet b.c.~and for any dimension $d\le 3$ was presented in~\cite{Lab19} using the theory of regularity structures~\cite{Hai14}: in the present article we rely on this construction for convenience.

Let us provide a brief description of the aforementioned renormalisation procedure. Consider the operator $\cH_{\eps,L} = -\Delta + \xi_\eps + C_\eps$ associated with a regularized noise $\xi_\eps = \xi * \rho_\eps$, where $\rho_\eps$ is a smooth function that lives at scale $\eps$. This operator is well-defined since $\xi_\eps$ is a smooth function. In the references above, it is shown that if one chooses properly the \emph{renormalisation constant} $C_\eps$, then $\cH_{\eps,L}$ converges in norm resolvent sense to some limit that we call $\cH_L$. Note that, as $\eps \downarrow 0$, $C_\eps$ diverges logarithmically in dimension $2$ and polynomially in dimension $3$. We refer the reader to Section \ref{sec:LDP} for further details.

In fine, these constructions yield a self-adjoint operator $\cH_L$ on $L^2((-L/2,L/2)^d)$ with pure point spectrum bounded from below: we let $(\lambda_{k,L})_{k\ge 1}$ be its eigenvalues in non-decreasing order and $(\varphi_{k,L})_{k\ge 1}$ be the corresponding eigenfunctions normalised in $L^2$. In contrast with dimension $1$, very little is known on the spectrum of $\cH_L$: in dimension $2$, the asymptotic behaviour as $L\to\infty$ of the smallest eigenvalues was derived in~\cite{CvZ19} while the existence of a density of states was proven in~\cite{Matsuda}; in dimension $3$, essentially no result on the spectrum is available.\\

For later use, let us recall the Gagliardo-Nirenberg inequality - also referred to as Ladyzhenskaya's inequality
\begin{equation}\label{Eq:GN} \|f\|_{L^4(\R^d)} \le C \|\nabla f\|_{L^2(\R^d)}^{d/4} \|f\|_{L^2(\R^d)}^{1-d/4}\;,\end{equation}
and let $\kappa_d$ be the associated optimal constant, that is
\begin{equation}\label{Eq:kappa}
\kappa_d := \sup_{\substack{f \in H^1(\R^d)}} \frac{\|f\|_{L^4(\R^d)}}{\|\nabla f\|_{L^2(\R^d)}^{d/4} \|f\|_{L^2(\R^d)}^{1-d/4}}\;.
\end{equation}
The main result of this article is as follows.

\begin{theorem}
	Fix $d \in \{1, 2, 3\}$ and $n\in \N$. Then almost surely
	\begin{equation}
	\lambda_{n,L} \sim - \left(C_d \log L\right)^{\frac{1}{2 - \frac{d}{2}}}\;,\quad L\to\infty\;.
	\end{equation}
	The constant $C_d$ can be expressed in terms of the Gagliardo-Nirenberg constant through the relation
	\begin{equation}
	C_d = \frac{d^{1 + \frac{d}{2}}(4-d)^{2 - \frac{d}{2}}}{8} \,\kappa_d^{4}\;.
	\label{eq:C_d}
	\end{equation}
	\label{thm_asymp}
\end{theorem}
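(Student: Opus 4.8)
The plan is to prove matching upper and lower bounds on $\lambda_{n,L}$, both governed by the same large-deviation rate for the white noise. The heuristic is that the smallest eigenvalue is determined by a competition: to make $\lambda_{1,L}$ very negative, the noise $\xi$ must dip down to roughly $-v$ on some region where an eigenfunction can concentrate, but such a downward fluctuation of the (renormalised) white noise on a set of volume $R^d$ costs probability $\exp(-c\, v^2 R^d)$ by the Gaussian nature of $\xi$. Optimising the trade-off between the depth $v$, the size $R$, and the kinetic energy cost $R^{-2}$ of localising a function leads precisely to the Gagliardo–Nirenberg constant and the exponent $1/(2-d/2)$.

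**Upper bound**

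For the upper bound $\lambda_{n,L} \le -(C_d\log L)^{1/(2-d/2)}(1-o(1))$, I would proceed by a second-moment / independence argument. Partition $Q_L$ into $\sim (L/\ell)^d$ disjoint boxes of side $\ell$, with $\ell$ chosen to grow slowly (a large constant, or a slowly growing function, times the natural length scale of the optimiser). On each box, the restricted noise is (nearly) independent across boxes, and the probability that $\cH$ restricted to a given box has its lowest eigenvalue below a threshold $-v$ is, by a large-deviations estimate for the quadratic form $f\mapsto \langle f,\xi f\rangle + \|\nabla f\|_2^2$ (supplied by the LDP machinery of Section~\ref{sec:LDP}), of order $\exp(-I(v)+o(I(v)))$ where $I(v)$ is computed by minimising $\tfrac12\|g\|_{L^2}^2$ over functions $g$ (the shifted noise profile) subject to $-\Delta + g$ having an eigenvalue $\le -v$ on $\R^d$; a scaling/variational computation identifies this rate as $I(v) = c_d\, v^{2-d/2}$ with $c_d$ exactly the reciprocal of $C_d$ up to the constants in~\eqref{eq:C_d}. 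Choosing $v = (C_d\log L)^{1/(2-d/2)}(1-\delta)$ makes the per-box probability $\gg (\ell/L)^d$, so by Borel–Cantelli along a geometric subsequence $L_k$ (and independence across boxes) at least $n$ boxes succeed with probability tending to one; since eigenvalues of the Dirichlet problem on $Q_L$ interlace with those of the direct sum over the disjoint boxes, this produces $n$ eigenvalues of $\cH_{L_k}$ below $-v$. Monotonicity of $\lambda_{n,L}$ in $L$ (again from Dirichlet bracketing) fills in the gap between consecutive $L_k$.

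**Lower bound**

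For the lower bound $\lambda_{n,L} \ge -(C_d\log L)^{1/(2-d/2)}(1+o(1))$ it suffices (since $\lambda_{n,L}\ge\lambda_{1,L}$ and we may afford to lose a bounded factor $n$ at the level of a union bound) to bound $\lambda_{1,L}$ from below on a single box. The key analytic input is a deterministic lower bound on the quadratic form: for any $f\in H^1_0(Q_L)$,
\begin{equation}
\langle f, \cH_L f\rangle \;=\; \|\nabla f\|_{L^2}^2 + \langle f, \xi f\rangle \;\ge\; \|\nabla f\|_{L^2}^2 - \|\xi\|_{\mathcal{C}^{-\alpha}}\,\|f^2\|_{\mathcal{B}^{\alpha}}\;,
\end{equation}
interpreting the pairing $\langle f,\xi f\rangle$ through the renormalised/Wick-ordered product as provided by the construction in~\cite{Lab19}, and then interpolating $\|f^2\|$ in a Besov/Sobolev norm against $\|\nabla f\|_{L^2}$ and $\|f\|_{L^2}$ via Gagliardo–Nirenberg-type inequalities so that, after optimising, $\langle f,\cH_L f\rangle \ge -c(1+\|\xi\|^{2/(2-d/2)})\|f\|_{L^2}^2$ with the sharp constant $c$. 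One then needs a tail bound: the relevant local noise norm $\|\xi\|_{\mathcal{C}^{-\alpha}(B)}$ over a box $B$ of side $\ell$ exceeds $t$ with probability at most $\exp(-c\, t^2 \ell^d)$ (Gaussian concentration for the noise in negative Hölder spaces, with the correct quadratic rate — the renormalisation constants are deterministic and do not affect the large deviations). A union bound over the $\sim(L/\ell)^d$ boxes then shows $\max_B \|\xi\|_{\mathcal{C}^{-\alpha}(B)} \le (C'\log L)^{1/2}$ almost surely for large $L$, which converts to $\lambda_{1,L} \ge -(C_d\log L)^{1/(2-d/2)}(1+o(1))$ after matching the constants. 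Getting the constant to match exactly — rather than up to a multiplicative factor — requires that the Gagliardo–Nirenberg interpolation in the deterministic bound be used in its \emph{optimal} form and that the geometry of the near-extremal noise profiles (which are, to leading order, rescalings of the Gagliardo–Nirenberg optimiser) be respected; this is where the explicit formula~\eqref{eq:C_d} comes from.

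**Main obstacle**

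The principal difficulty, and where most of the work will go, is making the variational large-deviations analysis rigorous in dimensions $2$ and $3$ despite the singularity of $\xi$: the naive quadratic form $\langle f,\xi f\rangle$ is ill-defined, and one must show that the renormalisation (which shifts by a constant diverging as $\eps\downarrow 0$) does not alter the leading-order rate function, and that the relevant large-deviation principle holds at the level of the \emph{operator} (or of the ground-state energy functional) with rate function equal to the explicit $v^{2-d/2}$ expression. Concretely, one needs (i) a Gamma-convergence or continuity statement showing the minimisation over shifted noise profiles $g$ of $\tfrac12\|g\|_{L^2}^2$ subject to an eigenvalue constraint for $-\Delta+g$ equals the Gagliardo–Nirenberg functional evaluated at the optimiser, and (ii) control that the probabilistic cost of producing such a profile locally is asymptotically $\exp(-\tfrac12\|g\|_{L^2}^2)$, uniformly enough to survive the union/second-moment bound over $(L/\ell)^d$ boxes. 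The enhanced-noise/regularity-structures estimates of Section~\ref{sec:LDP} should give the inputs, but threading them through so the multiplicative constant is sharp (not merely the exponent and power of $\log L$) is the crux, and is presumably why dimension $3$ was open.
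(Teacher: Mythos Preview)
Your upper bound sketch is essentially the paper's argument and is fine: partition, use an LDP for the bottom eigenvalue on each sub-box, Borel--Cantelli along a geometric sequence, monotonicity to fill in. The paper packages this through a two-sided tail estimate (its Theorem~\ref{thm:tail_estimates}) and the scaling identity $\beta^2\lambda_1(Q_L,\xi)\stackrel{d}{=}\lambda_1(Q_{L/\beta},\beta^{2-d/2}\xi)+\delta_\beta$ with $\beta=x^{-1/2}$, which converts ``$\lambda_1\le -x$'' into a small-noise LDP statement amenable to Hairer--Weber's model-level LDP and a contraction argument.

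The lower bound is where your proposal diverges from the paper and, as written, has a genuine gap. You want a deterministic bound
\[
\langle f,\cH_L f\rangle \;\ge\; \|\nabla f\|_{L^2}^2 - \|\xi\|_{\cC^{-\alpha}}\,\|f^2\|_{\cB^\alpha}
\]
followed by Besov--Gagliardo--Nirenberg interpolation and a Gaussian tail on $\max_B\|\xi\|_{\cC^{-\alpha}(B)}$. There are two obstructions. First, in $d=2,3$ the form domain of $\cH_L$ is \emph{not} $H^1_0$ and there is no quadratic form of the shape $\int|\nabla f|^2+\int\xi f^2$ on $H^1_0$ after renormalisation; the construction in~\cite{Lab19} gives resolvents, not a Wick-ordered pairing on $H^1_0$, so the displayed inequality is not available as stated. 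Second, and more fundamentally, even if one could make sense of it, this route cannot produce the sharp constant $C_d$. The duality bound loses information: the LDP cost of making $\|\xi\|_{\cC^{-\alpha}(B)}$ large is governed by profiles that are \emph{not} the Gagliardo--Nirenberg optimisers (extremisers of a negative H\"older norm are maximally concentrated, not $Q$-shaped), and the interpolation constant relating $\|f^2\|_{\cB^\alpha}$ to $\|\nabla f\|_{L^2}^{d/2}\|f\|_{L^2}^{2-d/2}$ is not $\kappa_d$ for any $\alpha>0$. You acknowledge this (``getting the constant to match exactly\dots is the crux'') but offer no mechanism to recover it; in fact it cannot be recovered along this route, because you have already discarded the $L^2$ structure of the noise that pins down $\frac12\|g\|_{L^2}^2$ as the rate.

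The paper's fix is to treat the lower bound \emph{symmetrically} with the upper bound: the same scaling $\beta=x^{-1/2}$ and the same LDP for $\lambda_1(Q_r,\beta^{2-d/2}\xi)$ with rate function $I_r(x)=\inf\{\tfrac12\|V\|_{L^2}^2:\lambda_1(Q_r,V)=x\}$ give both tails of $\P(\lambda_1(Q_L,\xi)\ge -x)$, via box-division from above and from below (Proposition~\ref{prop:boites}). The sharp constant then drops out of the variational identity $\lim_{r\to\infty}I_r((-\infty,-1])=\rho$ with $\rho^{-1}$ expressed through \eqref{Eq:Inf}; no H\"older norm of $\xi$ enters. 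The LDP at the level of the \emph{eigenvalue} (obtained by contraction from Hairer--Weber's LDP for the model, Theorem~\ref{lem:PGD_modele}) is what makes the renormalisation issues disappear from the rate function and is the missing ingredient in your lower bound.
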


Let us make some comments on this result. In dimension $1$, the Gagliardo-Nirenberg constant is known to be $\kappa_1 = 3^{-1/8}$ and the result rewrites
$$\lambda_{n,L} \sim -\left(\frac{3}{8} \log L\right)^{2/3}\;,\quad L\to\infty\;.$$
This asymptotic is already covered by more precise results~\cite{McKean,DL19}, in which not only the asymptotic behaviour of $\lambda_{n,L}$ but also its fluctuations are derived, see the end of the introduction for more details. It can also be connected to a result of Chen~\cite{Chen} on the total-mass of the associated parabolic Anderson model.

In higher dimension the Gagliardo-Nirenberg constant $\kappa_d$ is not explicit anymore. In dimension $2$, the asymptotic is
$$\lambda_{n,L} \sim - \kappa_2^4 \log L\;,\quad L\to\infty\;,$$
and was recently established by Chouk and van Zuijlen~\cite{CvZ19} (see also~\cite{Gaudreau} for related results for smooth Gaussian noises). A minor improvement over their result is that our convergence holds almost surely over $L\to\infty$, and not only over sequences $L_k\to\infty$, see Remark \ref{Rk:Op} for some explanations. In dimension $3$, the asymptotic is
$$\lambda_{n,L} \sim -\frac{243}{64} \kappa_3^8 \left(\log L\right)^2\;,\quad L\to\infty\;,$$
and in that case, the result is new.\\

Our proof is carried out simultaneously in all dimensions $d\le 3$ in order to emphasise the dependence on $d$ of the arguments and of the overall result. Let us point out that we follow the same strategy of proof as Chouk and van Zuijlen~\cite{CvZ19} who covered the case of dimension $2$. In fact, the proof essentially boils down to establishing a tail estimate on the principal eigenvalue: this is the content of the next (more general) result.

\begin{theorem}
	Fix $\eta \in (0,1)$ and $n\ge 1$. There exist $\gamma_2 > \gamma_1 > 0$ and $x_0 > 0$ such that the following inequalities hold: for all $L \ge 1$ and all $x \ge x_0$ we have 
	\begin{align}\label{Eq:Tail}
	\exp\left[ - \gamma_2 x^{d/2} e^{d\log L - (1 - \eta) \rho x^{2 - d/2}} \right] \leq
	\P(\lambda_{n, L} \geq -x) \leq \exp\left[ - \gamma_1 x^{d/2} e^{d\log L - (1 + \eta) \rho x^{2 - d/2}}\right]
	\end{align}
	with $\rho = d / C_d$.
	\label{thm:tail_estimates}
\end{theorem}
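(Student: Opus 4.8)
The plan is to reduce the two-sided estimate \eqref{Eq:Tail} to a sharp tail bound for the \emph{principal} eigenvalue on a single box of bounded side-length, and to transfer this to $Q_L$ by splitting $Q_L$ into $\asymp L^d$ cells. One should read $x^{d/2}e^{d\log L-(1\pm\eta)\rho x^{2-d/2}}$ as $L^dx^{d/2}\,p_\pm$, where $p_\pm:=e^{-(1\pm\eta)\rho x^{2-d/2}}$ is a two-sided (slightly de-tuned) proxy for the probability that a single cell carries an eigenvalue below $-x$, and $x^{d/2}$ counts the translates of a bump of scale $x^{-1/2}$ inside a cell. The central input is: \emph{for every $\delta\in(0,1)$ there are $c>0$, a side-length $R=R(\delta)$ and $x_0$ such that, for all $x\ge x_0$,}
\begin{equation*}
c\,e^{-(1+\delta)\rho x^{2-d/2}}\ \le\ \P\bigl(\lambda_1(\cH^{\mathrm{Dir}}_{Q_R})\le -x\bigr)\ \le\ e^{-(1-\delta)\rho x^{2-d/2}}\;.
\end{equation*}
Here $\rho x^{2-d/2}$ is exactly the value of the deterministic problem $\inf\{\tfrac12\|v\|_{L^2}^2:\ \lambda_1(-\Delta+v)\le -x\}$: a Lagrange-multiplier computation gives the minimiser $v=-s\phi^2$ with $\phi$ a rescaling to scale $x^{-1/2}$ of the Gagliardo--Nirenberg extremiser, and minimal value $\tfrac{8}{d^{d/2}(4-d)^{2-d/2}\kappa_d^4}\,x^{2-d/2}$, which is $\rho x^{2-d/2}$ with $\rho=d/C_d$ for $C_d$ as in \eqref{eq:C_d}. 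This is the only place $\kappa_d$ enters.

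Granting this, the lower bound in \eqref{Eq:Tail} follows as follows. Since $\lambda_{1,L}\le\lambda_{n,L}$ we have $\{\lambda_{1,L}\ge -x\}\subseteq\{\lambda_{n,L}\ge -x\}$, so it suffices to bound $\P(\lambda_{1,L}\ge -x)$ from below. Cover $Q_L$ by $\asymp L^d$ boxes $\tilde Q_i$ of side $\asymp R$ carrying a smooth partition of unity $\sum_i\chi_i^2=1$ with $\|\nabla\chi_i\|_\infty\lesssim R^{-1}$ and bounded overlap; the IMS identity $\cH_{Q_L}=\sum_i\chi_i\cH_{Q_L}\chi_i-\sum_i|\nabla\chi_i|^2$ gives $\lambda_{1,L}\ge\min_i\lambda_1(\cH^{\mathrm{Dir}}_{\tilde Q_i})-CR^{-2}$, so that the event $\{\lambda_1(\cH^{\mathrm{Dir}}_{\tilde Q_i})\ge -x+CR^{-2}\ \text{for all }i\}$ is contained in $\{\lambda_{1,L}\ge -x\}$. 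Each of these events is increasing in the white noise, hence by the FKG inequality for Gaussian fields (or, if available, by Neumann bracketing into disjoint boxes) the probability of their intersection is at least $\prod_i(1-p_i)\ge e^{-2\sum_ip_i}$ once every $p_i:=\P(\lambda_1(\cH^{\mathrm{Dir}}_{\tilde Q_i})<-x+CR^{-2})$ is $\le\tfrac12$. Applying the single-box upper bound with $x$ replaced by $x-CR^{-2}$ (a constant shift, changing the exponent only by a sub-leading $O(x^{1-d/2})$) and choosing $\delta$ small in terms of $\eta$ yields $\sum_ip_i\le C'L^d e^{-(1-\eta/2)\rho x^{2-d/2}}$, which is $\le\tfrac12\gamma_2L^dx^{d/2}e^{-(1-\eta)\rho x^{2-d/2}}$ for $x\ge x_0$; this gives $\P(\lambda_{n,L}\ge -x)\ge e^{-\gamma_2 L^dx^{d/2}e^{-(1-\eta)\rho x^{2-d/2}}}$.

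For the upper bound in \eqref{Eq:Tail} I would use Dirichlet bracketing. Partition $Q_L$ into $N\asymp (L/R)^d$ disjoint sub-boxes $Q_i$ of side $R=R(\eta)$; then $\cH_{Q_L}\le\bigoplus_i\cH^{\mathrm{Dir}}_{Q_i}$, so the $n$-th eigenvalue of $\cH_{Q_L}$ does not exceed the $n$-th smallest eigenvalue of the direct sum, and $\{\lambda_{n,L}\ge -x\}$ is contained in the event that fewer than $n$ of the $\cH^{\mathrm{Dir}}_{Q_i}$ carry an eigenvalue below $-x$. The noises on the $Q_i$ being independent, this event has probability $\P(\mathrm{Bin}(N,p)\le n-1)\le c_n\max(1,(Np)^{n-1})e^{-(N-n)p}$ with $p:=\P(\lambda_1(\cH^{\mathrm{Dir}}_{Q_R})\le -x)$. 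The single-box lower bound gives $Np\ge c\,R^{-d}L^d e^{-(1+\delta)\rho x^{2-d/2}}$, and for $\delta<\eta$ this dominates $L^dx^{d/2}e^{-(1+\eta)\rho x^{2-d/2}}$ once $x\ge x_0$; hence $\P(\mathrm{Bin}(N,p)\le n-1)\le e^{-\frac12Np}\le e^{-\gamma_1 L^dx^{d/2}e^{-(1+\eta)\rho x^{2-d/2}}}$. (For $n=1$ this is just $(1-p)^N\le e^{-Np}$; for general $n$ one may alternatively partition $Q_L$ into $n$ equal blocks $R_1,\dots,R_n$, observe $\lambda_{n,L}\le\max_j\lambda_1(\cH^{\mathrm{Dir}}_{R_j})$, and invoke the $n=1$ bound on each block.)

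The main obstacle is the single-box estimate in the renormalised regime $d\in\{2,3\}$, together with the identification of its rate with $\rho$. The lower bound is a Cameron--Martin argument: shifting the white noise by the smooth profile $h=-s\phi^2$ (with a little extra depth) leaves the renormalisation constant $C_\eps$, hence the construction of $\cH$, unchanged, and the density $e^{-\langle h,\xi\rangle-\frac12\|h\|_{L^2}^2}$ costs $e^{-(1+o(1))\frac12\|h\|_{L^2}^2}=e^{-(1+o(1))\rho x^{2-d/2}}$; one checks, on the event that an appropriate enhanced/Besov norm of $\xi$ is $\le M$, that $\lambda_1(\cH^{\mathrm{Dir}}_{Q_R}+h)\le -x$, the subtlety being that one must test the variational principle with an element of the (singular) form domain of $\cH$ built from $\phi$ and the enhancement of $\xi$, not with $\phi$ itself. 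The harder half is the matching upper bound: one needs an exponential Chebyshev/entropy estimate for $\P(\lambda_1(\cH^{\mathrm{Dir}}_{Q_R})\le -x)$ whose exponent is at least $(1-\delta)\rho x^{2-d/2}$ --- that is, one must show that the Gagliardo--Nirenberg optimisation is not merely the cost of the cheapest strategy but a genuine upper bound --- uniformly for boxes of side $\ge R$ so that localising into sub-boxes loses only $(1+o(1))$ in the exponent. Finally, the IMS/FKG step requires the IMS identity and a quantitative localisation of low-lying eigenfunctions (an Agmon-type estimate, or the form-bracketing inequalities used above) for the renormalised operator, which is classical for $d=1$ but must be established through the regularity-structures/paracontrolled framework when $d=2,3$.
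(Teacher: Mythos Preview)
Your overall architecture---split $Q_L$ into sub-boxes, feed in a sharp single-box tail, and combine by independence/union bound---is exactly the paper's. The essential divergence is in \emph{how the single-box input is obtained}, and here you are missing the paper's main device.

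The paper does not attempt to prove
\[
c\,e^{-(1+\delta)\rho x^{2-d/2}}\ \le\ \P\bigl(\lambda_1(Q_R,\xi)\le -x\bigr)\ \le\ e^{-(1-\delta)\rho x^{2-d/2}}
\]
directly. Instead it uses the scaling relation $\beta^2\lambda_1(Q_L,\xi)\stackrel{d}{=}\lambda_1(Q_{L/\beta},\beta^{2-d/2}\xi)+\delta_\beta$ with $\beta=1/\sqrt x$ to convert ``$\lambda_1\le -x$'' into ``$\lambda_1(Q_r,\beta^{2-d/2}\xi)\le -1$'' on a \emph{fixed} box $Q_r$ with \emph{small} noise. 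The single-box input is then the large deviation principle for the principal eigenvalue in the small-noise limit, with rate function $I_r$, which the paper obtains as a black box from the Hairer--Weber LDP for the \emph{model} in the regularity-structures framework (Theorem~\ref{lem:PGD_modele}) pushed forward through the continuous map ``model $\mapsto$ resolvent $\mapsto$ top eigenvalue''. The identification of the rate with $\rho$ then reduces to the deterministic study $I_r((-\infty,-1])\to\rho$ as $r\to\infty$ (Proposition~\ref{prop:rho}). This cleanly bypasses both halves you flag as obstacles: the ``exponential Chebyshev/entropy'' upper bound and the Cameron--Martin lower bound are replaced by a single abstract LDP in which all the renormalisation issues in $d=2,3$ are absorbed into the convergence of the model. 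Your direct route would essentially be re-proving this LDP by hand; the upper-bound half, which you correctly identify as the hard one, is precisely what Hairer--Weber supplies.

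A second, smaller point: for the lower bound on $\P(\lambda_{1,L}\ge -x)$ you invoke FKG for the events $\{\lambda_1(\cH^{\mathrm{Dir}}_{\tilde Q_i})\ge -x+CR^{-2}\}$ on overlapping boxes. Monotonicity of $\lambda_1$ in $\xi$ is unproblematic in $d=1$, but in $d\ge 2$ the operator depends on $\xi$ through the renormalised model (nonlinear Wick-type functionals), and it is not established that $\lambda_1$ is increasing in the Cameron--Martin order. The paper sidesteps this entirely: after the IMS bound $\lambda_{1,L}\ge\min_k\lambda_1(rk+Q_{3r/2},\beta^{2-d/2}\xi)-K/r^2$, it simply uses a union bound,
\[
\P(\lambda_{1,L}\ge -x)\ \ge\ 1 - c_2\Bigl(\tfrac{L}{\beta r}\Bigr)^d\,\P\bigl(\lambda_1(Q_{3r/2},\beta^{2-d/2}\xi)<-1-\delta_\beta+K/r^2\bigr),
\]
and then $1-y\ge e^{-2y}$. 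This is cruder than FKG but needs no monotonicity. Note also that after scaling the number of sub-boxes is $(L/(\beta r))^d\asymp L^dx^{d/2}/r^d$, so the $x^{d/2}$ prefactor in \eqref{Eq:Tail} appears naturally rather than being absorbed into the $\eta$-slack.

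In short: your skeleton is the right one; the missing idea is the scaling-to-small-noise reduction that lets one invoke a model-level LDP instead of proving the single-box tail from scratch, and the FKG step should be replaced by a union bound.
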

\noindent Observe that in the limit $L\to\infty$ and for small $\eta$ (take $\eta = 0$ for simplicity), the leftmost and rightmost functions in \eqref{Eq:Tail} pass abruptly from $0$ to $1$ around the critical value $x_c= (C_d \log L)^{1/(2-d/2)}$ where the exponent $d\log L - \rho x^{2 - d/2}$ vanishes. This implies that the distribution function $x\mapsto \P(\lambda_{n, L} \geq -x)$ is close to $0$ for $x \ll x_c$ and close to $1$ for $x\gg x_c$, and therefore that the distribution of $-\lambda_{n, L}$ concentrates near this critical value. Given this result, the derivation of Theorem \ref{thm_asymp} is relatively elementary.

\bigskip

We conclude this introduction with some conjectures. Let $a_L$ be the unique solution to the equation
 \begin{equation*}
 \frac{d}{2} \log a_L + d \log L - \rho a_L^{2 - d/2} = 0\;,
 \end{equation*}
and set
 \begin{equation*}
 	b_L := \frac{C_d}{d (2-\frac{d}{2}) a_L^{1-\frac{d}{2}}}\;.
 \end{equation*}
Note that the asymptotic expansion of $a_L$ is given by
\begin{equation}\label{eq:a_L}
	a_L^{2-d/2} := (C_d\log L) \left[ 1 + \frac{1}{4-d} \frac{\log\log L}{\log L} + o\left(\frac{\log\log L}{\log L}\right)\right].
\end{equation}

\begin{conjecture}
Take $d\in \{1,2,3\}$. The point process $\left(\frac{\lambda_{n,L} + a_L}{b_L}\right)_{n\ge 1}$ converges in law as $L\to\infty$ to a Poisson point process on $\R$ of intensity $e^x dx$. In particular, the r.v.
$$ -\frac{\lambda_{1,L}+a_L}{b_L}\;$$
converges in law to a Gumbel random variable.
\end{conjecture}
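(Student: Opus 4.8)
The plan is to prove this by the standard ``localisation $+$ decoupling'' route to Poisson statistics for extreme eigenvalues of random Schr\"odinger operators. Fix a slowly diverging scale $\ell_L$ (e.g.\ $\ell_L=(\log L)^{\alpha}$ for a suitable $\alpha>0$) and partition $Q_L$ into $N_L\asymp (L/\ell_L)^d$ congruent sub-boxes $(B_j)_{j\le N_L}$ of side $\ell_L$. On each $B_j$ let $\cH_{B_j}$ be the Anderson Hamiltonian with Dirichlet b.c.\ built from $\xi$ restricted to $B_j$ (with the same renormalisation constants $C_\eps$); since white noise is independent over disjoint sets, the operators $(\cH_{B_j})_j$ are independent and their principal eigenvalues $(\mu_j)_{j\le N_L}$ are i.i.d. First I would show that the point process $\big(\tfrac{\mu_j+a_L}{b_L}\big)_{j\le N_L}$ built from these i.i.d.\ minima converges to the claimed Poisson process; by the classical i.i.d.\ point-process limit theorem this reduces to checking $N_L\,\P\big(\mu_1\le -(a_L-b_L t)\big)\to e^{t}$ for every $t\in\R$. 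Then I would transfer the limit to the true eigenvalues $(\lambda_{n,L})$ by showing that, with probability $\to1$, the spectrum of $\cH_L$ inside the window $[-a_L-Cb_L,\,-a_L+Cb_L]$ agrees, up to an error $o(b_L)$, with the corresponding order statistics of $(\mu_j)_j$: a low eigenfunction of $\cH_L$ is exponentially localised well inside a single box $B_j$, hence is a good quasimode for $\cH_{B_j}$, and conversely the Dirichlet ground state of $\cH_{B_j}$ is a good quasimode for $\cH_L$. A Minami-type bound (with overwhelming probability no $B_j$ has two eigenvalues in that window) prevents double counting, and a union bound handles the few boxes meeting $\partial Q_L$.

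\noindent\textbf{Inputs needed.} Three ingredients. \emph{(i) A sharp tail estimate}, upgrading Theorem~\ref{thm:tail_estimates} to a genuine asymptotic equivalent $\P(\lambda_{1,L}\ge -x)=\exp\!\big(-(1+o(1))\,c\,x^{d/2}\,e^{\,d\log L-\rho x^{2-d/2}}\big)$ with the \emph{exact} rate $\rho$ (not $(1\pm\eta)\rho$) and an \emph{explicit} constant $c$, valid for boxes of side $\ell_L$ and thresholds $x\asymp a_L$. Plugging $x=a_L-b_L t$ and using $\rho\,a_L^{2-d/2}=d\log L$ turns the exponent into $t+o(1)$, so $N_L\,\P\big(\mu_1\le -(a_L-b_L t)\big)\sim c\,a_L^{d/2}\,e^{t}$; the polynomial factor $c\,a_L^{d/2}$ is then absorbed by a correction to the centering of order $b_L\log\log L$, so that the displayed $a_L,b_L$ give the leading-order normalisation while the exact sequences and the intensity $e^x\,dx$ are read off from the sharp estimate. \emph{(ii) Exponential localisation}: with probability $\to1$, for each $n$ below any fixed bound there is a centre $x_{n,L}\in Q_L$ with $|\varphi_{n,L}(y)|\lesssim e^{-c|y-x_{n,L}|}$, and the centres $x_{1,L},x_{2,L},\dots$ lie in pairwise distinct boxes. \emph{(iii) A two-sided comparison of the spectra} of $\cH_L$ and $\bigoplus_j\cH_{B_j}$ in that window, with quantitative errors for cutting and gluing eigenfunctions across box interfaces.

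\noindent\textbf{Main obstacle.} The genuinely hard point is (ii): Anderson-type exponential localisation near the bottom of the spectrum. In $d=1$ it follows from the Riccati/transfer-matrix analysis behind~\cite{DL19}, and there the conjecture is essentially a theorem. In $d=2,3$ the operator exists only after renormalisation and no self-contained localisation proof is known; obtaining one would require a multiscale or fractional-moment scheme compatible with the regularity-structures construction --- a substantial program on its own. Input (i) is also delicate: the two bounds of Theorem~\ref{thm:tail_estimates} rest on non-matching one-sided large-deviation estimates whose exponents still carry an $\eta$, and pinning down $c$ calls for a Laplace-type second-order expansion around the Gagliardo--Nirenberg optimiser, including control of the Gaussian fluctuations of the field about the optimal profile, complicated by the counterterms $C_\eps$. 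Granting (i)--(iii), the Poisson limit follows from the routine i.i.d.\ extreme-value argument above, and the Gumbel limit for $-(\lambda_{1,L}+a_L)/b_L$ is immediate, since the smallest point of a Poisson process on $\R$ with intensity $e^x\,dx$ exceeds $t$ with probability $e^{-e^{t}}$, i.e.\ its negative has the Gumbel distribution $e^{-e^{-s}}$.
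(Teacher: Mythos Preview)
The statement is a \emph{conjecture}: the paper does not prove it, and explicitly says so (noting only that in $d=1$ it is a theorem via~\cite{McKean,DL19}). There is therefore no ``paper's own proof'' to compare against. What you have written is not a proof but a plausible research programme, and you are candid about this.

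Your outline follows the standard route (decoupling into i.i.d.\ sub-boxes, i.i.d.\ extreme-value theory for the local ground states, then comparison with the true spectrum), and you correctly isolate the two genuine obstacles. First, Theorem~\ref{thm:tail_estimates} is far too crude: its exponent carries $(1\pm\eta)\rho$ and unspecified constants $\gamma_1,\gamma_2$, whereas Poisson convergence with intensity $e^x\,dx$ requires the one-box tail $\P(\lambda_1(Q_\ell)\le -x)$ with an exact exponential rate and a controlled polynomial prefactor. Your own computation shows that the prefactor $a_L^{d/2}$, if present with a generic constant, forces a shift of order $b_L\log\log L$ in the centering; since $b_L\log\log L$ is \emph{not} $o(b_L)$, this is not a harmless adjustment and would in fact conflict with the conjecture as literally stated with the displayed $a_L$. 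So either the sharp density-of-states asymptotic has a prefactor that exactly matches (as it does in $d=1$, where the result is known), or the conjecture should be read as ``for suitable $a_L\sim(C_d\log L)^{1/(2-d/2)}$''. Either way this is not a detail one can wave through. Second, and more seriously, the transfer step (your (ii)--(iii)) presupposes that low-lying eigenfunctions of $\cH_L$ are exponentially localised in a single sub-box and that no box carries two eigenvalues in the window. In $d=1$ this comes from the Riccati analysis; in $d=2,3$ no such localisation result is available for the renormalised operator, and producing one (multiscale or fractional-moment, compatible with the regularity-structures construction) is precisely the open problem the conjecture encodes.

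In short: your strategy is the expected one and your diagnosis of the difficulties is accurate, but nothing here constitutes a proof --- which is consistent with the paper's own position that the statement is open for $d\ge 2$.
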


\bigskip

Our second conjecture concerns the asymptotic behaviour of the eigenfunctions near their maxima. We let $U_{n,L} \in [-L/2,L/2]^d$ be the point where $|\varphi_{n,L}|$ reaches its global maximum. Let $Q$ be the unique radial positive solution on $\R^d$ of
$$ -\Delta Q - Q^3 = - Q\;.$$
It is known that - up to translations, dilatations and rescalings - $Q$ is the unique optimiser of the Gagliardo-Nirenberg inequality \eqref{Eq:GN}, see~\cite{Frank} and references therein. One can deduce from~\cite[Sec.5]{Lewin} that $\|Q\|_{L^4}^4 = \frac{2d}{C_d}$.

\begin{conjecture}\label{Conj:2}
Take $d\in \{1,2,3\}$. For any $n\ge 1$, the following convergence holds in probability as $L\to\infty$
\begin{align*}
\bigg(\frac1{a_L^{d/4}} |\varphi_{n,L}|\Big(U_{n,L} + \frac{x}{\sqrt{a_L}}\Big), x\in \R^d\bigg) &\Rightarrow \psi_*\;,\\
\bigg(\frac1{a_L} \xi\Big(U_{n,L} + \frac{x}{\sqrt{a_L}}\Big), x\in \R^d\bigg) &\Rightarrow -\frac{{\psi_*}^2}{\|\psi_*\|_{L^4(\R^d)}^2} \sqrt{\frac{2d}{C_d}}\;,
\end{align*}
with $\psi_*(x) = Q(x) / \|Q\|_{L^2}$.
\end{conjecture}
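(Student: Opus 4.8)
Since this is a conjecture, what follows is a proposed route rather than a proof. The two convergences are tightly coupled: writing $\psi_L(x):=a_L^{-d/4}|\varphi_{n,L}|(U_{n,L}+x/\sqrt{a_L})$ and $\zeta_L(x):=a_L^{-1}\xi(U_{n,L}+x/\sqrt{a_L})$, the eigenvalue equation rescaled around $U_{n,L}$ reads $-\Delta\psi_L+\zeta_L\psi_L=(\lambda_{n,L}/a_L)\psi_L$, and since $\lambda_{n,L}/a_L\to-1$ by Theorem~\ref{thm_asymp}, any subsequential limit $(\psi_*,\zeta_\infty)$ must satisfy $-\Delta\psi_*+\zeta_\infty\psi_*=-\psi_*$; imposing $\psi_*=Q/\|Q\|_{L^2}$ with $-\Delta Q-Q^3=-Q$ and $\|Q\|_{L^4}^4=2d/C_d$ forces $\zeta_\infty=-\|Q\|_{L^2}^2\psi_*^2=-\psi_*^2\|\psi_*\|_{L^4}^{-2}\sqrt{2d/C_d}=-Q^2$, which is exactly the claimed limit. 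So the content of the conjecture is a \emph{conditioned} large-deviation statement: conditionally on $\lambda_{n,L}$ attaining its extreme typical value $\simeq-a_L$, the white noise in an $a_L^{-1/2}$-window around the eigenfunction's peak, suitably rescaled, concentrates on the deterministic minimiser of the Gagliardo--Nirenberg problem \eqref{Eq:GN}, and the eigenfunction on the corresponding ground state.

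\emph{Proposed steps.} (i) Strengthen Theorem~\ref{thm:tail_estimates} to precise spectral information near the bottom: $\lambda_{k,L}+a_L=O(b_L)$ for $k\le n$, with $b_L$-scale spacings, and --- via Agmon/Combes--Thomas estimates based on $\lambda_{n,L}\simeq-a_L$ --- exponential decay of $|\varphi_{n,L}|$ on scale $a_L^{-1/2}$ away from $U_{n,L}$, together with the fact that the $n$ lowest localisation centres are pairwise far apart. (ii) Describe the extremal spectrum locally: the low-lying eigenvalues come from $O(1)$ well-separated ``wells'' where $\xi$ is atypically negative, and, conditionally on the rare event, the rescaled noise profile in each well is asymptotically deterministic. (iii) Identify that profile via the Euler--Lagrange equations of the rate functional: among noise fields whose Schr\"odinger operator has principal eigenvalue $\le-a_L$, the one of least Gaussian cost $\tfrac12\|h\|_{L^2}^2$ obeys $h=-\mu\,\varphi^2$ with $\varphi$ the normalised ground state and $\mu>0$ the multiplier, i.e.\ the self-consistent problem $-\Delta\varphi-\mu\varphi^3=-a_L\varphi$, whose rescaling is precisely $(Q/\|Q\|_{L^2},\,-Q^2)$ at spatial scale $a_L^{-1/2}$; uniqueness and non-degeneracy of $Q$ up to symmetries (\cite{Frank} and references therein) single it out as the unique minimiser and provide the coercivity needed for concentration. (iv) Conclude: from ``$\zeta_L\to-Q^2$'' and continuity of the relevant eigenpair in the potential on the rescaled window, deduce $\psi_L\to\psi_*=Q/\|Q\|_{L^2}$; the passage from $n=1$ to general $n$ is immediate because the top $n$ wells all have depth $\simeq a_L$, so the local picture around each (in particular around $U_{n,L}$) is the same.

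\emph{Main obstacle.} Steps (ii)--(iii): carrying out a Laplace-type principle in an infinite-dimensional and --- for $d\in\{2,3\}$ --- \emph{renormalised} setting, i.e.\ proving that the conditioned law of the enhanced noise concentrates, in an appropriate topology, around the translation family of minimisers, with the centre $U_{n,L}$ itself a random parameter that must be factored out, while controlling the fluctuations of the renormalised operator and showing they are negligible in the rescaled window. Even the \emph{unconditioned} sharp large-deviation asymptotics for $\lambda_{1,L}$, with the correct constant and fluctuations on scale $b_L$ (the content of the conjectured Poisson limit for the sequence $(\lambda_{n,L}+a_L)/b_L$), are not available in $d\ge2$ and are essentially a prerequisite; in $d=1$ the much finer information in~\cite{McKean,DL19} makes the statement considerably more accessible. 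This gap is why the result remains conjectural.
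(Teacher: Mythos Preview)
This statement is a \emph{conjecture}, and the paper does not prove it either; Subsection~2.4 (``About Conjecture~\ref{Conj:2}'') offers only a heuristic discussion, just as you do. Your route and the paper's are essentially the same: both identify the conjecture as a \emph{conditioned} large-deviation statement, both reduce to the rescaled eigenvalue equation $-\Delta\psi_*+\zeta_\infty\psi_*=-\psi_*$ with the eigenvalue limit $\lambda_{n,L}/a_L\to-1$ from Theorem~\ref{thm_asymp}, and both identify the limiting noise profile $\zeta_\infty=V_*=-\psi_*^2\|\psi_*\|_{L^4}^{-2}\sqrt{2d/C_d}=-Q^2$ via the variational/Euler--Lagrange structure of the rate function $I_L$ (your step~(iii), the paper's computation via Propositions~\ref{prop:rho} and~\ref{prop:variation}).

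The presentations differ in emphasis. The paper's heuristic proceeds by the explicit rescaling $\beta^2=1/a_L$ of Proposition~\ref{prop:egalite_loi}, reducing the problem to the small-noise LDP of Proposition~\ref{prop:PGD} on a box of fixed size $r$, and then reads off $V_*$ as the argmin of $I_\infty$; the formal passage to the limit on the eigenvalue equation is the final line. Your discussion instead front-loads the rescaled equation and then lists the analytic machinery (Agmon/Combes--Thomas decay, separation of localisation centres, coercivity from the non-degeneracy of $Q$) that a genuine proof would require, and you correctly flag the renormalised Laplace principle in $d\ge2$ as the main obstacle. Both accounts are sound as heuristics and neither constitutes a proof; your version is more explicit about where the hard work lies, while the paper's is more explicit about the scaling computation that pins down the constants.
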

\noindent In Conjecture \ref{Conj:2} the first convergence holds in a space of distributions; the abuse of notation regarding the scaling on $\xi$ shall be interpreted in the distributional sense, i.e. passing the scaling operations to test functions.


\noindent In dimension $1$, these two conjectures were actually proven by Dumaz and Labb\'e~\cite{DL19} (the convergence to a Gumbel r.v.~was proven earlier by McKean~\cite{McKean}). In that case, we have
$$ a_L \sim \Big(\frac{3}{8} \log L\Big)^{2/3}\;,\quad b_L = \frac1{4\sqrt{a_L}}\;,\quad \psi_* = \frac1{\sqrt 2 \cosh}\;,\quad \frac{{\psi_*}^2}{\|\psi_*\|_{L^4(\R^d)}^2} \sqrt{\frac{2d}{C_d}} = \frac{2}{\cosh^2}\;.$$

\bigskip

The present work is organized in the following way. In Section \ref{sec:proofs}, we collect intermediate ingredients and provide the proofs of Theorem \ref{thm:tail_estimates} and Theorem \ref{thm_asymp}, together with the proofs of the ingredients that do not necessitate regularity structures. In Section \ref{sec:LDP}, we prove some technical results on the Anderson Hamiltonian and present the proof of a large deviation estimate stated in Section \ref{sec:proofs}. The Appendix collects some technical results.

\subsection*{Acknowledgements} C.L.~acknowledges financial support from the project SINGULAR ANR-16-CE40-0020-01. We would like to thank the two anonymous referees for their comments and suggestions.

\section{Proofs of the main theorems} \label{sec:proofs}

We start this section by collecting some simple properties of the operator $\cH_L$: actually, we will consider a more general framework where the spatial domain can be taken to be any given square box $Q\subset \R^d$, and where white noise comes with a prefactor $\beta > 0$, as it will be required later on. The second subsection presents a large deviation estimate for the main eigenvalue of $\cH_L$ with a small noise ($\beta \downarrow 0$), along with some information on the associated rate function. These properties of the rate function and some results about the associated variational problem are proved in the third subsection. In the fourth subsection, we provide the proofs of the main theorems. Finally, the last subsection gives some heuristic explanations on Conjecture \ref{Conj:2}.\\

From now on, we call \emph{box} any open bounded square box of $\R^d$ of side-length at least one and $d$ is always assumed to lie in $\{1,2,3\}$.

\subsection{Simple properties of the operator}

Let $Q$ be either a box in $\R^d$ or $\R^d$ itself, take $V\in L^2(Q)$, and define the operator
$$\cH(Q,V) := -\Delta + V\;,\quad \mbox{ on }Q\;,$$
endowed with Dirichlet b.c. It is well-known~\cite{Lewin-Spectral, Helffer} that for $d\le 3$, this operator is self-adjoint and bounded below. When $Q$ is bounded, its spectrum is discrete and we denote its eigenvalues in non-decreasing order by $(\lambda_n(Q,V))_{n\ge 1}$. When $Q = \R^d$, we let $\lambda_1(Q,V)$ be the infimum of its spectrum.

The next result constructs the operator as a limit of regularised versions in the case where $V$ is white noise. As recalled in the introduction, in dimension $d=2,3$ one needs to renormalise these regularised versions for the limit to exist.

Fix some even, smooth function $\rho$ integrating to $1$ and supported in the unit ball of $\R^d$. Set $\rho_\eps := \eps^{-d} \rho(\cdot/\eps)$ for any $\eps > 0$ and define $\xi_\eps := \xi*\rho_\eps$ for some white noise $\xi$. For any $\beta > 0$, we consider the renormalisation constant $C_\eps(\beta)$ associated to the noise $\beta \xi_\eps$: in order not to clutter the presentation, their precise expressions are provided in Appendix \ref{Sec:Renorm}. The proof of the following result is postponed to Subsection \ref{Subsec:Operator} as it requires notions from regularity structures.

\begin{proposition}\label{Prop:Construct}
Fix some parameter $\beta > 0$. There exists a sequence $\eps_k \downarrow 0$ and an event $\Omega_0(\beta)$ of probability one on which the following holds:
\begin{enumerate}
\item For every box $Q$, the sequence $\cH(Q, \beta \xi_{\eps_k} + C_{\eps_k}(\beta) )$ converges in norm resolvent sense to a self-adjoint operator, denoted $\cH(Q, \beta\xi)$ with a slight abuse of notation, with pure point spectrum bounded from below. We denote $(\lambda_n(Q, \beta\xi))_{n \ge 1}$ its eigenvalues in non-decreasing order.
\item For all $n \ge 1$ and all boxes $Q \subset Q'$, we have $\lambda_{n}(Q',\beta\xi) \le \lambda_{n}(Q,\beta\xi)$.
\item For all $L\ge 1$, all $n\ge 1$ and all disjoint boxes $Q_{(1)},\ldots,Q_{(n)} \subset Q$, we have
$$ \lambda_{n}(Q,\beta\xi) \le \max_{1\le i \le n} \lambda_{1}(Q_{(i)},\beta\xi)\;.$$ 
\end{enumerate}
\end{proposition}
\noindent In the particular case $Q=Q_L=(-L/2,L/2)^d$ and $\beta = 1$, we abbreviate $\cH(Q_L, \xi)$ into $\cH_L$ and $\lambda_n(Q_L, \xi)$ into $\lambda_{n, L}$.

\begin{remark}\label{Rk:Op}
Note that we construct the operator $\cH(Q,\xi)$ \emph{simultaneously} for all boxes $Q$, and this allows us to consider almost sure convergence of $\lambda_{n,L}$ as $L\to\infty$. This is to be compared with the construction in~\cite{CvZ19} that holds up to a $\P$-null set that possibly depends on $Q$. The fundamental reason for our simultaneous construction is that all the stochastic objects (the so-called model in the theory of regularity structures) are constructed at once on the full space, while the dependence over the given box $Q$ only goes through the deterministic weights chosen near the boundary of $Q$, see~\cite{Lab19}.
\end{remark}

Let us now collect some simple properties of these operators.

\begin{proposition}[Scaling, independence and invariance properties]\label{prop:scaling}
	\begin{enumerate}
		\item There exists a deterministic constant $\delta_\beta$ such that $\delta_\beta$ tends to $0$ as $\beta \downarrow 0$ and such that for all $L\ge 1$, $\beta > 0$ and $n\ge 1$ the following equality in law holds
		\[ \beta^{2}\lambda_n(Q_L, \xi) =  \lambda_n(Q_{L/\beta}, \beta^{2 - d/2} \xi) + \delta_{\beta} \;.\]
		\item For all disjoint boxes $Q_1,\ldots,Q_k$, the operators $\cH(Q_1,\beta \xi),\ldots,\cH(Q_k,\beta \xi)$ are independent.
		\item For all boxes $Q,Q'$ with the same side-length, $\cH(Q,\beta\xi)$ and $\cH(Q',\beta\xi)$ have the same law.
	\end{enumerate}
\end{proposition}
\begin{remark}
	Properties 2. and 3. should be understood at the level of the resolvents of the operators. Note that the resolvents at stake are random variables taking values in the space of compact operators on $L^2(Q)$, which, equipped with the operator norm, is a separable Banach space.
\end{remark}
\begin{proof}
The second and third properties are consequences of the independence and translation invariance properties of white noise, and of the construction of the operator $\cH(Q,\beta\xi)$ as a limit of regularised operators. We concentrate on the first property. Let $\tilde{\xi}_\eps(x) := \beta^2 \xi_\eps(x\beta)$ for all $x\in\R^d$. Consider the self-adjoint operator
$$ \tilde{\cH}_\eps := -\Delta + \tilde{\xi}_\eps + C_{\eps/\beta}(\beta^{2-\frac{d}{2}})\;,\quad \mbox{ on } Q_{L/\beta}\;.$$
Let $\lambda_{n,L,\eps}$ and $\varphi_{n,L,\eps}$ be the $n$-th eigenvalue and eigenfunction of $-\Delta + \xi_\eps + C_\eps(1)$ on $Q_L$. A computation shows that
$$ \tilde{\cH}_\eps \,\varphi_{n,L,\eps}(\cdot \beta) = \Big(\beta^2 \lambda_{n,L,\eps} + C_{\eps/\beta}(\beta^{2-\frac{d}{2}}) - \beta^2 C_\eps(1)\Big) \varphi_{n,L,\eps}(\cdot \beta)\;.$$
We thus deduce that the $n$-th eigenvalue of $\tilde{\cH}_\eps$ coincides with $\beta^2 \lambda_{n,L,\eps} + C_{\eps/\beta}(\beta^{2-\frac{d}{2}}) - \beta^2 C_\eps(1)$.\\
It turns out that $\tilde{\xi}_\eps$ has the same law as $\beta^{2-\frac{d}{2}} \xi_{\eps/\beta}$, and therefore $\tilde{\cH}_\eps$ has the same law as $\cH(Q_{L/\beta},\beta^{2-\frac{d}{2}} \xi_{\eps/\beta} + C_{\eps/\beta}(\beta^{2-\frac{d}{2}}))$. Passing to the limit along the sequence $\eps_k$ by using Proposition \ref{Prop:Construct}, we deduce the equality in law
$$ \lambda_n(Q_{L/\beta},\beta^{2-\frac{d}{2}}\xi) = \beta^2 \lambda_{n}(Q_L,\xi) - \delta_\beta\;,$$
where $\delta_\beta := \lim_{\eps \downarrow 0} \Big( \beta^2 C_\eps(1) - C_{\eps/\beta}(\beta^{2-\frac{d}{2}})\Big)$. From the asymptotic expressions of the renormalisation constants collected in Appendix \ref{Sec:Renorm}, we can deduce that $\delta_\beta =0$ in dimension $1$ while in dimensions $2$ and $3$, $\delta_\beta = O(\beta^2 \ln \beta^{-1})$ as $\beta\downarrow 0$.
\end{proof}

Finally, we state an estimate that allows to approximate, from above and below, the main eigenvalue over $Q_L$ in terms of the main eigenvalues over smaller boxes. This is a general result, which is originally due to G\"artner and K\"onig~\cite{GK00} in the case where the potential is smooth.

\begin{proposition}[Estimation by division into sub-boxes. See Appendix \ref{sec:boites}]
	There exists a constant $K > 0$ such that for all $\beta > 0$ and all $L > r \ge 1$, we have almost surely
	\begin{equation}
	\min_{k \in \Z^d : |k|_\infty \le \frac{L}{2r} + \frac34} \lambda_1(rk + Q_{3r/2}, \beta\xi) - \frac{K}{r^2} \leq \lambda_1(Q_L, \beta\xi) \leq \min_{k \in \Z^d : |k|_\infty < \frac{L}{2r} - \frac12} \lambda_1(rk + Q_{r}, \beta\xi) 
	\label{eq:boxes}
	\end{equation}
	\label{prop:boites}
\end{proposition}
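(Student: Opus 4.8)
The plan is to establish the two inequalities in \eqref{eq:boxes} separately, both by exploiting the monotonicity of eigenvalues under inclusion of domains (Proposition \ref{prop:basic}(2)) together with a covering/partition argument. For the upper bound, I would first note that the box $Q_L$ contains each of the disjoint translated boxes $rk + Q_r$ for $k\in\Z^d$ with $|k|_\infty < \frac{L}{2r}-\frac12$ (the constraint on $k$ is exactly what guarantees $rk+Q_r \subset Q_L$). Monotonicity of the bottom eigenvalue under domain inclusion then gives $\lambda_1(Q_L,\beta\xi) \le \lambda_1(rk+Q_r,\beta\xi)$ for each such $k$, and taking the minimum over $k$ yields the right-hand inequality immediately; no regularity-structure input is needed here since this is a purely variational (min-max) fact that passes to the limit $\eps_k\downarrow 0$ along with the operators.

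For the lower bound, the strategy is the classical Dirichlet-bracketing / IMS-type localisation argument of G\"artner and K\"onig. I would cover $Q_L$ by the family of larger boxes $rk+Q_{3r/2}$, $k\in\Z^d$, $|k|_\infty \le \frac{L}{2r}+\frac34$, chosen so that every point of $Q_L$ lies in at least one such box and, more importantly, so that one can build a smooth partition of unity $(\chi_k)_k$ subordinate to this cover with $\sum_k \chi_k^2 = 1$ on $Q_L$ and $\|\nabla \chi_k\|_\infty \le C/r$. Given any test function $\varphi$ supported in $Q_L$ (or in its eigenspace), the IMS localisation formula gives
\begin{equation*}
\langle \varphi, \cH(Q_L,\beta\xi)\varphi\rangle = \sum_k \Big( \langle \chi_k\varphi, \cH(rk+Q_{3r/2},\beta\xi)(\chi_k\varphi)\rangle - \|\,|\nabla\chi_k|\,\varphi\|_{L^2}^2\Big)\;,
\end{equation*}
and bounding each localised quadratic form below by $\lambda_1(rk+Q_{3r/2},\beta\xi)\|\chi_k\varphi\|_{L^2}^2$ and the error term by $(C^2/r^2)\|\chi_k\varphi\|_{L^2}^2$ (using disjointness of supports of $\nabla\chi_k$ up to bounded overlap), then summing and using $\sum_k\|\chi_k\varphi\|_{L^2}^2 = \|\varphi\|_{L^2}^2$, produces the left-hand inequality with $K$ depending only on the chosen partition of unity. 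The one subtlety is that the potential is only a distribution, so the identity above is not literally meaningful at the level of $\cH(Q_L,\beta\xi)$; I would therefore run the argument at the regularised level $-\Delta + \beta\xi_{\eps_k} + C_{\eps_k}(\beta)$, where everything is classical, and then pass to the limit $\eps_k\downarrow 0$ using the norm-resolvent convergence from Proposition \ref{Prop:Construct} (which in particular forces convergence of the bottom eigenvalues on each fixed box). Since the constant $K$ does not depend on $\eps$, the bound survives the limit.

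The main obstacle is precisely this last point: making the IMS/Dirichlet-bracketing computation rigorous in the presence of the singular white-noise potential. At the regularised level it is routine, but one must check that (i) the renormalisation constant $C_{\eps_k}(\beta)$ is the same for all the sub-boxes as for $Q_L$ (it is, since it is a purely local, domain-independent constant — only the deterministic boundary weights differ), so that the localisation identity is not polluted by mismatched counterterms, and (ii) the convergence of $\lambda_1$ on each of the finitely many sub-boxes is simultaneous on the event $\Omega_0(\beta)$, which is exactly what the simultaneous construction in Proposition \ref{Prop:Construct} provides. Once these bookkeeping issues are settled the inequalities fall out, and since the detailed verification is somewhat technical and standard I would relegate the full argument to Appendix \ref{sec:boites} as the statement indicates.
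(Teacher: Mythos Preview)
Your proposal is correct and follows essentially the same route as the paper: the upper bound via domain monotonicity (equivalently the min--max formula), the lower bound via the G\"artner--K\"onig IMS/partition-of-unity localisation with $\sum_k\chi_k^2=1$ and $\|\nabla\chi_k\|_\infty\lesssim 1/r$, all carried out at the regularised level $-\Delta+\beta\xi_{\eps_k}+C_{\eps_k}(\beta)$ and then passed to the limit via the simultaneous norm--resolvent convergence of Proposition~\ref{Prop:Construct}. Your remarks on the domain-independence of $C_{\eps_k}(\beta)$ and the simultaneity of the limit on $\Omega_0(\beta)$ are exactly the bookkeeping points the paper relies on.
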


\subsection{A large deviation estimate}

The proof of the tail estimates stated in Theorem \ref{thm:tail_estimates} revolves around the following large deviations estimate for the main eigenvalue. Its proof requires notions from the theory of regularity structures and is therefore postponed to the next section. From now on, we will often abbreviate $\lambda_1(Q,V)$ into $\lambda(Q,V)$.

\begin{proposition}[See Section \ref{sec:LDP}]
	Fix $L\ge 1$. The collection of random variables $(\lambda(Q_L, \beta\xi))_{\beta > 0}$ satisfies for all $c\in\R$ the following large deviations estimate:
	\begin{align*} -\inf_{x \in (-\infty,c)} I_L(x) &\leq \liminf_{\beta \to 0} \beta^2 \log \P\big(\lambda(Q_L, \beta\xi) \in (-\infty,c)\big)\\
	&\leq \limsup_{\beta \to 0} \beta^2 \log \P\big(\lambda(Q_L, \beta\xi) \in (-\infty,c]\big) \leq -\inf_{x \in (-\infty,c]} I_{L}(x)\;,
	\end{align*}
	where the rate function $I_{L}:\R \to [0, \infty]$ is defined by
	\[I_{L}(x) = \inf \left\{\frac{1}{2} \|V\|^2_{L^2(Q_L)}: V \in L^2(Q_L), \lambda(Q_L, V) = x\right\}\;,\quad x\in\R\;.\]
	\label{prop:PGD}
\end{proposition}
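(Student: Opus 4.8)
The plan is to deduce the large deviations principle from the abstract contraction principle, starting from the Schilder-type LDP for the regularised noise and transferring it through the (renormalised) solution map and the eigenvalue functional. The central difficulty, and the reason this is postponed to Section \ref{sec:LDP}, is that the map $\xi \mapsto \lambda(Q_L,\xi)$ is not continuous in any classical topology on the noise; it factors through the model of regularity structures, and one must establish continuity at the level of the (renormalised, lifted) model and show that the renormalisation constants $C_\eps(\beta)$ behave correctly under the scaling $\beta\downarrow 0$.

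First I would recall that $(\beta\xi_\eps)_{\beta>0}$, or rather the associated admissible model $\Pi^{(\beta,\eps)}$ built in \cite{Lab19}, satisfies an LDP with speed $\beta^{-2}$ and a good rate function: this is the Freidlin-Wentzell/Schilder principle for Gaussian measures, combined with the ``LDP for models'' philosophy (in the spirit of Hairer-Weber for regularity structures, or the paracontrolled analogue), using that the random part of the model is a fixed polynomial in $\xi$ of the Wiener chaos. The key input here is that $\beta^2 \log$ of the renormalisation only sees the leading term: because $C_\eps(\beta)$ is a polynomial in $\beta$ with no constant term (logarithmic in $d=2$, polynomial in $d=3$), the renormalised model converges, as $\eps\to0$ along $\eps_k$ and then $\beta\to0$, to the model built from the \emph{smooth} function $h$, with no anomalous contribution — so at the level of the rate function the renormalisation is invisible. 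Concretely, for a deterministic $h\in L^2(Q_L)$ the operator $\cH(Q_L,h)$ is just the classical Schrödinger operator $-\Delta+h$, whose principal eigenvalue $\lambda(Q_L,h)$ is given by the usual Rayleigh quotient; this is the object appearing in $I_L$.

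Next I would establish the continuity statement: the map sending an admissible model to the principal eigenvalue of the associated operator $\cH(Q_L,\cdot)$ is continuous on the relevant space of models. This uses the norm-resolvent convergence from Proposition \ref{Prop:Construct} together with the fact that eigenvalues depend continuously on the operator in norm-resolvent sense, and that the construction of $\cH(Q_L,\cdot)$ from the model is itself continuous — all of which is available in \cite{Lab19}. Granting this, the contraction principle yields an LDP for $(\lambda(Q_L,\beta\xi))_{\beta>0}$ with speed $\beta^{-2}$ and rate function
\[
\tilde I_L(x) = \inf\Big\{ \tfrac12\|h\|_{L^2(Q_L)}^2 : h\in L^2(Q_L),\ \lambda(Q_L,h)=x \Big\},
\]
which is exactly $I_L$. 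The mild asymmetry between open and closed sets in the statement (liminf over $(-\infty,c)$, limsup over $(-\infty,c]$) is the standard lower/upper bound dichotomy of an LDP applied to the half-lines, so no extra work is needed there; one should only check that $I_L$ is a good rate function, i.e.\ lower semicontinuous with compact sublevel sets, which follows because $\lambda(Q_L,\cdot)$ is continuous (indeed locally Lipschitz) on $L^2(Q_L)$ and $\{h:\lambda(Q_L,h)\le c\}$ is bounded in $L^2$ — the latter from the elementary bound $\lambda(Q_L,h)\ge \lambda_1(Q_L,0) - \|h\|_{L^2} \cdot (\text{Sobolev const})$ or directly from the Rayleigh quotient after controlling $\langle h\varphi,\varphi\rangle$.

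The main obstacle is the first two steps taken together: transferring the Gaussian LDP through the regularity-structures machinery while keeping track of the renormalisation. One must show that the renormalised models $\mathscr{M}^{(\beta)}$ — not merely the noises — satisfy an LDP, that their rate function is carried by the ``smooth'' models (no spurious renormalisation contribution in the limit, which is where the precise asymptotics of $C_\eps(\beta)$ from Appendix \ref{Sec:Renorm} enter, analogously to $\delta_\beta = O(\beta^2\log\beta^{-1})$ in Proposition \ref{prop:egalite_loi}), and then compose with the continuous eigenvalue map. Once this package is in place, the proposition follows by the contraction principle and an identification of the resulting rate function with $I_L$.
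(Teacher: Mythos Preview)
Your overall strategy is the paper's strategy: invoke the Hairer--Weber LDP for the renormalised model (Theorem~\ref{lem:PGD_modele}) and push it forward to the principal eigenvalue via a continuous map. The renormalisation issue you worry about is indeed absorbed in that black box, so no extra work is required there.

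The genuine gap is in the sentence ``the map sending an admissible model to the principal eigenvalue of the associated operator $\cH(Q_L,\cdot)$ is continuous on the relevant space of models'' followed by a direct appeal to the contraction principle. This map is \emph{not} continuous, nor even defined, on all of $\cM_{m,Q_L}$: the resolvent $\Phi_{m,Q_L}(Z,0)$ is only constructed by the fixed-point argument of Proposition~\ref{Prop:Resolv} on the closed ball $\cM_{m,Q_L,K(m)}$, and even there the supremum of its spectrum equals $(\lambda_1+m)^{-1}$ only if the resolvent is \emph{positive}, which is not automatic. The paper therefore does not apply the contraction principle; instead it localises. It introduces the event $E_{m,L}(\beta)$ on which $Z^{(m)}(\beta\xi)\in\cM_{m,Q_L,K(m)}$ \emph{and} the resolvent is positive, proves via a Gaussian-tail estimate (Lemma~\ref{Lemma:E}) that $\P(E_{m,L}(\beta)^\cc)\le Ce^{-cm^\nu/\beta^2}$, applies the model-level LDP to the closed/open preimages $\varphi_m^{-1}([\tfrac1{m+c},\infty))$ and $\varphi_m^{-1}((\tfrac1{m+c},\infty))$ of the continuous map $\varphi_m:Z\mapsto\sup\mathrm{Sp}\,\Phi_{m,Q_L}(Z,0)$ on the restricted domain, and finally sends $m\to\infty$ so that the bad-event contribution is beaten and so that any fixed $V\in L^2$ eventually lies in $\cM_{m,Q_L,K(m)}$ with positive resolvent (this last point is needed to identify the rate function in the lower bound, since a priori one only has $\lambda_1(Q_L,V)\le 1/\mu-m$).

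A minor point: your argument that $I_L$ is good is incorrect --- the set $\{h:\lambda(Q_L,h)\le c\}$ is unbounded in $L^2$ (take $h\equiv -n$) --- but the paper never needs goodness, since it works directly with the specific half-lines rather than deducing a full LDP for $\lambda$.
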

\begin{remark}
For any $x\in \R$ and any box $Q$, there exists $V$ such that $\lambda(Q,V) = x$. Indeed, it suffices to take $V$ constant equal to $x-x_0$ where $x_0 := \lambda(Q,0)$ is the lowest eigenvalue of $-\Delta$ on $Q$.
\end{remark}
We will use the notation $\mathrm{LD}(\beta^2,I_L)$ as a shortcut for the large deviations estimate of rate $\beta^2$ and rate function $I_L$.
We adopt the notation $I_L(J) := \inf_{x\in J} I_L(x)$ for any set $J\subset \R$ and we define the constant
	\begin{equation}
	\label{def:rho} \rho := \inf_{L > 0} I_L ((-\infty, -1]) = \lim_{L\to\infty} I_{L} ((-\infty, -1])\;,
	\end{equation}
where the second equality comes from the fact that $L\mapsto I_L((-\infty,x])$ is non-increasing.

\begin{proposition}[Study of $\rho$. See Section \ref{sec:rho}]
The following properties hold:
	\begin{enumerate}
	\item For all $L\ge 1$, the map $x\mapsto I_L((-\infty,x])$ is continuous.
		\item For all $b \in \R$ and all real sequence $(a_L)_{L \ge 1}$ such that $L^{d/2}a_L \to 0$ as $L \to \infty$, we have
		\[ \lim_{L \to \infty} I_{L}((-\infty, b + a_L]) = \lim_{L \to \infty} I_{L}((-\infty, b ]) = \lim_{L \to \infty} I_{L}((-\infty, b)).\]
		\item The constant $\rho$ can be evaluated through a variational problem :
		\begin{equation}
			\rho = \frac{1}{2} \left\{\sup_{\substack{\psi \in H^1(\R^d) \\ \|\psi\|_{L^2} = 1}} \left(\|\psi\|_{L^4}^2 -  \norm{\nabla \psi}_{L^2}^2\right)\right\}^{-(2-d/2)}
			\label{eq:rho_var}
		\end{equation}
		In particular, we have $C_d = d / \rho = \Big(d^{1+d/2}(4-d)^{2-d/2}/8 \Big)\kappa_d^4$. Moreover, optimizers for this variational problem are properly rescaled optimizers of the Gagliardo-Nirenberg inequality.
	\end{enumerate}
	\label{prop:rho}
\end{proposition}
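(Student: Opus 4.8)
The plan is to treat the three items in order, the first two by elementary manipulations (monotonicity, the constant‑shift identity, a direct‑method compactness) and the third by reducing $\rho$ to an explicit variational problem on $\R^d$. For item~1, set $J_L(x):=I_L((-\infty,x])=\inf\{\tfrac12\|V\|_{L^2(Q_L)}^2:V\in L^2(Q_L),\ \lambda(Q_L,V)\le x\}$, which is non‑increasing and finite (the remark following Proposition~\ref{prop:PGD} produces an admissible $V$ for any threshold). Left‑continuity comes from the constant‑shift trick: if $\lambda(Q_L,V)\le x_0$ then $\lambda(Q_L,V-(x_0-x))=\lambda(Q_L,V)-(x_0-x)\le x$ while $\|V-(x_0-x)\|_{L^2(Q_L)}\to\|V\|_{L^2(Q_L)}$ as $x\uparrow x_0$, so $\limsup_{x\uparrow x_0}J_L(x)\le J_L(x_0)$. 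Right‑continuity is a direct‑method argument: pick $V_n$ with $\lambda(Q_L,V_n)\le x_0+1/n$ and $\tfrac12\|V_n\|^2\to J_L(x_0^+)$, together with $\psi_n\in H^1_0(Q_L)$, $\|\psi_n\|_{L^2}=1$, $\|\nabla\psi_n\|_{L^2}^2+\langle V_n,\psi_n^2\rangle\le x_0+2/n$; extending by zero and combining~\eqref{Eq:GN} with $d/2<2$ (Young's inequality) bounds $\psi_n$ in $H^1_0(Q_L)$, hence along a subsequence $V_n\rightharpoonup V_*$ in $L^2(Q_L)$ and $\psi_n\to\psi_*$ in $L^2\cap L^4$ by Rellich; passing to the limit yields $\|\psi_*\|_{L^2}=1$, $\|\nabla\psi_*\|_{L^2}^2+\langle V_*,\psi_*^2\rangle\le x_0$, so $\lambda(Q_L,V_*)\le x_0$ and $J_L(x_0)\le\tfrac12\|V_*\|^2\le J_L(x_0^+)$. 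Left‑continuity also gives $I_L((-\infty,b))=J_L(b^-)=J_L(b)=I_L((-\infty,b])$ for every $L$, which is one of the two equalities in item~2.

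For item~2, it remains to prove $\lim_{L}I_L((-\infty,b+a_L])=\lim_{L}I_L((-\infty,b])$ when $L^{d/2}a_L\to0$, both limits existing since $L\mapsto I_L(\cdot)$ is non‑increasing. One uses the shift trick while tracking the cost: from a near‑optimiser $V_L$ of one side, $V_L\pm a_L$ is admissible for the other, with $\tfrac12\|V_L\pm a_L\|_{L^2(Q_L)}^2=\tfrac12\|V_L\|_{L^2(Q_L)}^2\pm a_L\!\int_{Q_L}V_L+\tfrac12 a_L^2 L^d$. Cauchy--Schwarz bounds $|a_L\!\int_{Q_L}V_L|$ by $|a_L|L^{d/2}\|V_L\|_{L^2(Q_L)}$, and since $I_L((-\infty,b\pm1])\le I_1((-\infty,b\pm1])<\infty$ the relevant near‑optimisers have $L^2$‑norm bounded uniformly in $L$; with $a_L\to0$ and $a_L^2L^d=(L^{d/2}a_L)^2\to0$, both error terms vanish, which gives the two matching inequalities.

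For item~3, the heart of the matter is to identify $\rho=\lim_L I_L((-\infty,-1])$ from~\eqref{def:rho} with the whole‑space problem $\rho_\infty:=\inf\{\tfrac12\|V\|_{L^2(\R^d)}^2:V\in L^2(\R^d),\ \lambda(\R^d,V)\le-1\}$, where $\lambda(\R^d,V)=\inf_{\|\psi\|_{L^2}=1}(\|\nabla\psi\|_{L^2}^2+\langle V,\psi^2\rangle)$ is well‑defined and bounded below because, for $d\le3$, an $L^2$ potential is form‑bounded relative to $-\Delta$ with relative bound zero (by~\eqref{Eq:GN}, $|\langle V,\psi^2\rangle|\le\|V\|_{L^2}\|\psi\|_{L^4}^2\le\eps\|\nabla\psi\|_{L^2}^2+C_\eps\|\psi\|_{L^2}^2$, using $d/2<2$). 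The inequality $I_L((-\infty,-1])\ge\rho_\infty$ holds for every $L$: extend an admissible $V$ and its principal Dirichlet eigenfunction by zero outside $Q_L$. Conversely, testing $I_L((-\infty,-1])$ with $V=-\mu\psi^2$, $\psi\in C_c^\infty(\R^d)$, $\|\psi\|_{L^2}=1$, supported in $Q_L$ (legitimate once $L$ is large), $\mu=(1+\|\nabla\psi\|_{L^2}^2)/\|\psi\|_{L^4}^4$, gives $\lambda(Q_L,-\mu\psi^2)\le-1$ at cost $(1+\|\nabla\psi\|_{L^2}^2)^2/(2\|\psi\|_{L^4}^4)$; density of $C_c^\infty$ in $H^1$ yields $\limsup_L I_L((-\infty,-1])\le\inf\{(1+\|\nabla\psi\|_{L^2}^2)^2/(2\|\psi\|_{L^4}^4):\psi\in H^1(\R^d),\ \|\psi\|_{L^2}=1\}$, and a Cauchy--Schwarz duality in $V$ (the optimal $V$ for a fixed $\psi$ being proportional to $\psi^2$) shows this infimum equals $\rho_\infty$. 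Hence $\rho=\rho_\infty=\inf\{(1+\|\nabla\psi\|_{L^2}^2)^2/(2\|\psi\|_{L^4}^4):\|\psi\|_{L^2}=1\}$.

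It then only remains to evaluate this infimum. The dilation $\psi\mapsto a^{d/2}\psi(a\,\cdot)$ preserves $\|\psi\|_{L^2}$ and multiplies $\|\nabla\psi\|_{L^2}^2$ by $a^2$ and $\|\psi\|_{L^4}^4$ by $a^d$, so optimising over $a$ by elementary calculus reduces the infimum to an explicit constant times $\inf\{\|\nabla\psi\|_{L^2}^d/\|\psi\|_{L^4}^4:\|\psi\|_{L^2}=1\}=\kappa_d^{-4}$, the sharp constant in~\eqref{Eq:GN}. Running the same dilation argument on the supremum in~\eqref{eq:rho_var} (whose maximiser is then a dilate of a Gagliardo--Nirenberg optimiser) and matching the powers of $d$, $4-d$ and $\kappa_d$ gives both $\rho=\tfrac12\{\cdots\}^{-(2-d/2)}$ and $C_d=d/\rho=\big(d^{1+d/2}(4-d)^{2-d/2}/8\big)\kappa_d^4$, that is~\eqref{eq:C_d}, with the optimiser as claimed. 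The principal obstacle is the identification $\rho=\rho_\infty$ and the housekeeping it rests on — well‑posedness and the variational characterisation of $\lambda(\R^d,\cdot)$, the extension‑by‑zero estimate, and density of compactly supported potentials; once this is in place, the remaining evaluation is routine calculus.
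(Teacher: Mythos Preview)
Your argument is correct, and the overall architecture matches the paper's: a constant–shift identity for items~1 and~2, then the identification of $\rho$ with the whole–space problem $\rho_\infty$ and a dilation reduction to the Gagliardo--Nirenberg constant for item~3. There are, however, two genuine methodological differences worth noting.

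For item~1, you split into left– and right–continuity and use a direct–method compactness argument (weak $L^2$ limits of $V_n$, Rellich on $\psi_n$) to handle the right limit. The paper instead derives the two–sided bound
\[
(1-\delta)\,J_L(b)+\tfrac12(1-\delta^{-1})a^2L^d \ \le\ J_L(b+a)\ \le\ (1+\delta)\,J_L(b)+\tfrac12(1+\delta^{-1})a^2L^d
\]
directly from the shift identity and Young's inequality, which gives continuity in one stroke with no functional–analytic compactness. Your route is heavier but has the incidental payoff that it actually produces a minimiser $V_*$ for $J_L(x_0)$.

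For item~3, both proofs establish $\rho=\rho_\infty$, but the converse inequality is obtained differently. The paper takes a near–optimal $V_\eps\in L^2(\R^d)$, rescales it slightly to force $\lambda(\R^d,\bar V_\eps)\le -1-\eps$, and then invokes the convergence $\lambda(Q_L,\bar V_\eps)\to\lambda(\R^d,\bar V_\eps)$ as $L\to\infty$. You instead bypass this convergence by testing with the explicit potentials $V=-\mu\psi^2$, $\psi\in C_c^\infty$, and relying on $H^1$–density; this is arguably more self–contained. Downstream, the paper passes through the intermediate form $\rho=\tfrac12\{\sup_{\|W\|_{L^2}=1}(-\lambda(\R^d,W))\}^{-(2-d/2)}$ via a scaling bijection $V\mapsto W$, whereas you go directly to the $\psi$–functional $(1+\|\nabla\psi\|_{L^2}^2)^2/(2\|\psi\|_{L^4}^4)$ and only then dilate. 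The two computations are equivalent rearrangements of the same scaling/duality, and both correctly land on~\eqref{eq:C_d}.
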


\subsection{Properties of the rate function} \label{sec:rho}
Recall the Gagliardo-Nirenberg inequality \eqref{Eq:GN} and the associated optimal constant $\kappa_d$ defined in \eqref{Eq:kappa}. In the study of the rate function of the large deviation estimate will appear a variational problem that is closely related to the Gagliardo-Nirenberg inequality: in the following result we collect a few facts on this variational problem.
\begin{lemma}
	We have
	\begin{equation}\label{Eq:Sup}
	\sup_{\substack{\psi \in H^1(\R^d) \\ \norm{\psi}_{L^2} = 1}} \left(\norm{\psi}_{L^4}^2 - \norm{\nabla \psi}_{L^2}^2 \right) = \left(\frac{d}{4}\right)^{\frac{d}{4-d}} \left(\frac{4-d}{4}\right) \kappa_d^{8/(4-d)} = \left(\frac{C_d}{2d}\right)^{\frac1{2-d/2}}\;,
	\end{equation}
	where $C_d$ is given by \eqref{eq:C_d}.
	
	Moreover, a function $w$ with unit $L^2$ norm is an optimizer of \eqref{Eq:Sup} if and only if $w = \lambda^{d/2} u(\lambda \cdot)$ with $\lambda^{2-d/2} = \frac{d \norm{u}_{L^4}^2}{4\norm{\nabla u}_{L^2}^2}$, where $u$ is an optimizer of the Gagliardo-Nirenberg inequality with unit $L^2$ norm.
	
	Finally, we also have
	\begin{align}
	\sup_{\substack{\psi \in H^1(\R^d) \\ \norm{\psi}_{L^2} = 1}} \left(\norm{\psi}_{L^4}^2 - \norm{\nabla \psi}_{L^2}^2 \right) = \sup_{\substack{V \in L^2(\R^d)\\\norm{V}_{L^2} = 1}} -\lambda(\R^d, V)\;.
	\label{eq:rho-2}
	\end{align}
	\label{lem:problem-variationnel}
\end{lemma}
\begin{proof}

For any $u \in H^1(\R^d)$ such that $\|u\|_{L^2}=1$, we write $J(u) := \norm{u}_{L^4}^2 - \norm{\nabla u}_{L^2}^2$. Consider the family of functions $w(x) = \lambda^{d/2} u(\lambda x)$ indexed by $\lambda > 0$. Plugging $w$ into the functional $J$ gives us
\[ J(w) = \lambda^{d/2} \norm{u}_{L^4}^2 - \lambda^2 \norm{\nabla u}_{L^2}^2.\]
for all $\lambda > 0$. The RHS is maximized for $\lambda^{2-d/2} = \frac{d \norm{u}_{L^4}^2}{4\norm{\nabla u}_{L^2}^2}$, which results in the identity
\[ \sup_{\lambda > 0} J(w) = \left(\frac{d}{4}\right)^{\frac{d}{4-d}} \left(\frac{4-d}{4}\right) \left(\frac{\norm{u}_{L^4}}{\norm{\nabla u}_{L^2}^{d/4}}\right)^{\frac{8}{4-d}}.\]
We distinguish two cases. Either $u$ (and therefore all the $w$'s) optimizes the Gagliardo-Nirenberg inequality \eqref{Eq:GN}, in which case
\[ \sup_{\lambda > 0} J(w) = \left(\frac{d}{4}\right)^{\frac{d}{4-d}}\left(\frac{4-d}{4}\right) \kappa_d^{\frac{8}{4-d}}\;.\]
Or $u$ (and therefore all the $w$'s) are not optimizers of the Gagliardo-Nirenberg inequality, and then
\[ \sup_{\lambda > 0} J(w) < \left(\frac{d}{4}\right)^{\frac{d}{4-d}} \left(\frac{4-d}{4}\right) \kappa_d^{\frac{8}{4-d}}\;.\]
This proves \eqref{Eq:Sup}. Moreover, this shows that $w$ is an optimizer of $J$ (over the functions of unit $L^2$ norm) if and only if $w = \lambda^{d/2} u(\lambda \cdot)$ with $\lambda^{2-d/2} = \frac{d \norm{u}_{L^4}^2}{4\norm{\nabla u}_{L^2}^2}$ where $u$ is an optimizer of the Gagliardo-Nirenberg inequality with unit $L^2$ norm.

To prove the final identity of the statement, note that
\begin{align*}
\sup_{\substack{V \in L^2(\R^d)\\\norm{V}_{L^2} = 1}} -\lambda(\R^d, V) = \sup_{\substack{\psi \in H^1\\\norm{\psi}_{L^2} = 1}} \sup_{\substack{V \in L^2(\R^d)\\\norm{V}_{L^2} = 1}} \int \left(-|\nabla \psi|^2 - V\psi^2\right) = \sup_{\substack{\psi \in H^1(\R^d) \\ \norm{\psi}_{L^2} = 1}} \left(\norm{\psi}_{L^4}^2 - \norm{\nabla \psi}_{L^2}^2 \right)
\end{align*}
where we have chosen $V = -\psi^2/\norm{\psi}_{L^4}^2$ to attain the supremum over $V$.
\end{proof}

\begin{proof}[Proof of Proposition \ref{prop:rho}]
1. \& 2. Notice that for $V \in L^2$ and $a \in \R$, we have $\lambda(Q_L, V + a\un_{Q_L}) = \lambda(Q_L, V) + a$ and that for all $\delta > 0$, $\pm 2\langle a\un_{Q_L}, V\rangle = \pm 2\langle (a/\sqrt{\delta})\un_{Q_L}, \sqrt{\delta} V \rangle \leq \delta \norm{V}^2 + \delta^{-1} a^2 L^{d}$. As a result,
\begin{align*}
I_{L} \big((-\infty, b+a]\big) &= \inf\left\{ \frac{1}{2} \norm{V}^2 : \lambda(Q_L, V) \le b + a \right\}\\
&= \inf\left\{ \frac{1}{2} \norm{V}^2 : \lambda(Q_L, V - a\un_{Q_L}) \le b \right\}\\
&= \inf\left\{ \frac{1}{2} \norm{V + a \un_{Q_L}}^2 : \lambda(Q_L, V) \le b \right\}\\
&\begin{cases}
\le (1 + \delta)  I_{L}((-\infty, b]) + \frac12 (1 + \delta^{-1}) a^2 L^d\\
\ge (1 - \delta)  I_{L}((-\infty, b]) + \frac12 (1 - \delta^{-1}) a^2 L^d
\end{cases}
\end{align*}
This easily entails 1. and the first equality of 2. Combining the following inequalities
\[ \inf I_{L}((-\infty, b]) \leq \inf I_{L}((-\infty, b)) \leq \inf I_{L}((-\infty, b - L^{-d}]), \]
with the arguments above yields the second equality of 2.

3. Given \eqref{eq:rho-2}, the statement follows if we can establish
\begin{equation}
\rho = \frac{1}{2} \inf_{\substack{V \in L^2(\R^d)\\ \lambda(\R^d, V) \le -1}} \norm{V}_{L^2}^2\;,
\label{eq:rho-0}
\end{equation}
and 
\begin{align}
\inf_{\substack{V \in L^2(\R^d)\\ \lambda(\R^d, V) \le -1}} \norm{V}_{L^2}^2 =  \inf_{\substack{V \in L^2(\R^d)\\ \lambda(\R^d, V) = -1}} \norm{V}_{L^2}^2 = \left\{\sup_{\substack{W \in L^2(\R^d)\\\norm{W}_{L^2} = 1}} -\lambda(\R^d, W)\right\}^{-(2-d/2)}\;.
\label{eq:rho-3}
\end{align}

Both equalities in \eqref{eq:rho-3} are proven by an argument of scaling. Let us start with the second. By Lemma \ref{lem:problem-variationnel}, the rightmost term of \eqref{eq:rho-3} is strictly positive and therefore
	\begin{equation}
	\left\{\sup_{\substack{W \in L^2(\R^d)\\ \norm{W}_{L^2} = 1}} -\lambda(\R^d, W) \right\}^{-(2-d/2)}
	=	\left\{\sup_{\substack{W \in L^2(\R^d)\\ \norm{W}_{L^2} = 1\\\lambda(\R^d, W) < 0}} -\lambda(\R^d, W) \right\}^{-(2-d/2)}=
	\inf_{\substack{W \in L^2(\R^d)\\ \norm{W}_{L^2} = 1\\ \lambda(\R^d, W) < 0}} \frac{1}{[-\lambda(\R^d, W)]^{2-d/2}}
	\label{eq:rho-4}
	\end{equation}
	Consequently, it suffices to show that
	\begin{equation}\label{Eq:VW} \inf_{\substack{V \in L^2(\R^d)\\ \lambda(\R^d, V) = -1}} \norm{V}_{L^2}^2 = \inf_{\substack{W \in L^2(\R^d)\\ \norm{W}_{L^2} = 1\\ \lambda(\R^d, W) < 0}} \frac{1}{[-\lambda(\R^d, W)]^{2-d/2}}\;.\end{equation}
	Consider the map $V\mapsto W$ defined by: for any $V\in L^2(\R^d)$ such that $\lambda(\R^d, V) = -1$, set $W(x) = r^2 V(rx)$ with $r$ defined by $r^{4-d} = \norm{V}_{L^2}^{-2}$. A simple computation shows that $-\lambda(\R^d,W) = -r^2\lambda(\R^d,V) = r^2$ and $\norm{W}_{L^2}^2 = r^{4-d}\norm{V}_{L^2}^2 = 1$. It is thus immediate to deduce that our map $V\mapsto W$ is a bijection between the two sets that appear in \eqref{Eq:VW}, and by construction, we have $\norm{V}_{L^2}^2 =  \frac{1}{[-\lambda(\R^d, W)]^{2-d/2}}$. The second equality of (\ref{eq:rho-3}) is thus proved.
	
We turn to the first equality in \eqref{eq:rho-3}. Recall that for $d\le 3$, the operator $-\Delta + V$ is bounded below whenever $V\in L^2$. Consequently $\lambda(\R^d,V) > -\infty$. Take $V\in L^2(\R^d)$ such that $\lambda(\R^d, V) < -1$ and set $W(x) = r^2 V(rx)$ with $r$ defined by $r^2 = -1 / \lambda(\R^d,V)$. The previous computations show that $\lambda(\R^d, W) = -1$ and $\norm{V}_{L^2}^2 > \norm{W}_{L^2}^2$. Hence,
	\[ \inf_{\substack{V \in L^2(\R^d)\\ \lambda(\R^d, V) \le -1}} \norm{V}_{L^2}^2 \ge  \inf_{\substack{W \in L^2(\R^d)\\ \lambda(\R^d, W) = -1}} \norm{W}_{L^2}^2. \]
	The converse inequality is obvious.

Now it remains to show (\ref{eq:rho-0}), or in other words,
\[\inf_{\substack{V \in L^2(\R^d)\\ \lambda(\R^d, V) \le -1}} \norm{V}_{L^2}^2 = \inf_{L > 0} \inf_{\substack{V \in L^2(Q_L)\\ \lambda(Q_L, V) \le -1}} \norm{V}_{L^2(Q_L)}^2.\]

The fact that l.h.s. $\leq$ r.h.s. is a direct consequence of the inequality $\lambda(\R^d, V \un_{Q_L}) \le \lambda(Q_L, V)$. On the other hand, fix any $\eps > 0$ and pick $V_\eps \in L^2(\R^d)$ such that $\lambda(\R^d,V_\eps) \le -1$ and
\[\| V_\eps \|_{L^2}^2 < (1 + \eps)  \inf_{\substack{V \in L^2(\R^d)\\\lambda(\R^d, V) \le -1}} \norm{V}_{L^2}^2\;.\]
Let $\bar{V}_\eps := \alpha^2 V_\eps(\alpha \cdot)$ with $\alpha = \sqrt{1 + \eps}$. By the scaling property $\lambda(\R^d, \bar{V}_\eps) = \alpha^2 \lambda(\R^d, V_\eps)$, we have
\begin{align*}
&\norm{\bar{V}_\eps}_{L^2}^2 = \alpha^{4 - d} \norm{V_\eps}_{L^2}^2 < (1+\eps)^{3 - d/2} \inf_{\substack{V \in L^2(\R^d)\\\lambda(\R^d, V) \le -1}} \norm{V}_{L^2}^2 \\
&\lambda(\R^d, \bar{V}_\eps) = \alpha^{2} \lambda(\R^d, V_\eps) \leq -\alpha^{2} = - 1 - \eps
\end{align*}
It can be checked that as $L\to\infty$, $ \lambda(Q_L,\bar{V}_\eps) \to \lambda(\R^d,\bar{V}_\eps)$, and therefore for all $L$ large enough we have
\[\lambda(Q_L,\bar{V}_\eps) \le -1.\]
thus implying for all $L$ large enough
\[\inf_{\substack{V \in L^2(Q_L)\\ \lambda(Q_L, V) \le -1}} \norm{V}_{L^2}^2 \leq \norm{\bar{V}_\eps}_{L^2(Q_L)}^2 \leq \norm{\bar{V}_\eps}_{L^2(\R^d)}^2 \leq (1+\eps)^{3 - d/2}\inf_{\substack{V \in L^2(\R^d)\\ \lambda(\R^d, V) \le -1}} \norm{V}_{L^2}^2.\]
\eqref{eq:rho-0} is then proved by taking an infimum over $L$ and making $\eps$ shrink to zero.

Combining (\ref{eq:rho-0}), (\ref{eq:rho-2}) and (\ref{eq:rho-3}), (\ref{eq:rho_var}) is then established, whereas the expression for $C_d$ and the properties satisfied by the optimizer are consequences of Lemma \ref{lem:problem-variationnel}.
\end{proof}

\subsection{Proofs of Theorems \ref{thm_asymp} and \ref{thm:tail_estimates}}\label{sec:main-proofs}

We can now proceed to the proof of the tail estimates.
\begin{proof}[Proof of Theorem \ref{thm:tail_estimates}]
	We will only consider $n = 1$ as this is the only case needed for the proof of Theorem \ref{thm_asymp}. To treat the general case $n\ge 1$, it suffices to argue as in the proof of Theorem \ref{thm_asymp} below, see in particular \eqref{Eq:1ni}. For $x > 0$, we write $\beta = 1/\sqrt{x}$. We begin by applying the scaling property of Proposition \ref{prop:scaling}: 
	\[\P\left(\lambda_1(Q_L, \xi) \geq -x\right) = \P\left(\beta^2 \lambda_1(Q_L, \xi) \geq -1\right) = \P\left(\lambda_1(Q_{L/\beta}, \beta^{2 - d/2} \xi) + \delta_{\beta} \geq -1\right).\] 
	At this point, for any $1\le r \le L/\beta$ we squeeze the domain $Q_{L/\beta}$ in between unions of boxes of size $r$ and $3r/2$ respectively:
	\[ \bigcup_{k\in\Z^d: |k|_\infty \le \frac{L}{2\beta r}-\frac12} \{rk + Q_r\} \quad\subset \quad Q_{L/\beta} \quad \subset\quad \bigcup_{k\in\Z^d: |k|_\infty \le \frac{L}{2\beta r}+\frac34} \{rk + Q_{3r/2}\}\;.\]
	Note that the number of boxes in both unions is of order $(\frac{L}{\beta r})^d$ as $\beta \downarrow 0$ uniformly over all $L\ge 1$, so that for any given $c_1 < 1 < c_2$ there exists $\beta_1 = \beta_1(r)$ such that for all $\beta < \beta_1$ and all $L\ge 1$ these two numbers are comprised in between $c_1 (\frac{L}{\beta r})^d$ and $c_2 (\frac{L}{\beta r})^d$.
	Applying the estimation by division into small boxes of Proposition \ref{prop:boites} at the first line and the independence property of Proposition \ref{prop:scaling} at the second line, we thus get for all $x \ge x_1 := \beta_1^{-2}$
	\begin{align*}
	\P(\lambda_1(Q_L, \xi) \geq -x) &\leq \P\left(\min_{k \in \Z^d : |k|_\infty < \frac{L}{2 \beta r} - \frac12} \lambda_1(rk + Q_{r}, \beta^{2 - d/2} \xi) \geq -1 -  \delta_{\beta}\right) \\
	&\leq \left[1 - \P\left( \lambda_1(Q_{r}, \beta^{2 - d/2} \xi)  < -1 - \delta_\beta\right)\right]^{c_1 \left(\frac{L}{\beta r}\right)^d}\;.
	\end{align*}
	Regarding the lower bound we also apply Proposition \ref{prop:boites}. However the r.v.~that appear are no longer independent so we use a union bound at the third line to get
	\begin{align*}
	\P(\lambda_1(Q_L, \xi) \geq -x) &\geq \P\left(\min_{k \in \Z^d : |k|_\infty < \frac{L}{2 \beta r} + \frac34} \lambda_1(rk + Q_{3r/2}, \beta^{2 - d/2} \xi) \geq -1 -  \delta_{\beta} + \frac{K}{r^2}\right) \\
	&\geq 1 - \P\left(\min_{k \in \Z^d : |k|_\infty < \frac{L}{2 \beta r} + \frac34} \lambda_1(rk + Q_{3r/2}, \beta^{2 - d/2} \xi) < -1 -  \delta_{\beta} + \frac{K}{r^2}\right) \\
	&\geq 1 - c_2 \left(\frac{L}{ \beta r}\right)^d \P\left( \lambda_1(Q_{3r/2}, \beta^{2 - d/2} \xi)  < -1 - \delta_{\beta} + \frac{K}{r^2}\right)\;.
	\end{align*}
	Since $\delta_{\beta}$ goes to $0$ deterministically as $\beta \to 0$ and given the continuity of the rate function stated as item 1. of Proposition \ref{prop:rho}, the term $\delta_\beta$ does not affect large deviation behaviors. Consequently the large deviations estimate of Proposition \ref{prop:PGD} implies that for fixed $r$ and as $\beta\downarrow 0$
	\[\lambda_1(Q_{r}, \beta^{2 - d/2} \xi) + \delta_{\beta} \sim \mathrm{LD}(\beta^{4 - d}, I_{r}), ~~ \lambda_1(Q_{3r/2}, \beta^{2 - d/2} \xi) + \delta_{\beta} \sim \mathrm{LD}(\beta^{4 - d}, I_{3r/2}).\]
	Moreover the properties on the rate function collected in Proposition \ref{prop:rho} show that
	\[ \rho = \lim_{r \to \infty} I_{r}\big((-\infty, -1)\big) = \lim_{r \to \infty} I_{3r/2}\big((-\infty, -1 + K/r^2]\big) > 0\;.\]
	Fix some $\eta > 0$. Choosing $r$ large enough we thus have
	\[ \rho (1 - \eta) < I_{3r/2}\big((-\infty, -1 + K/r^2]\big)\;,\quad  I_{r}\big((-\infty, -1)\big) < \rho (1 + \eta)\;. \]
	The aforementioned large deviations estimates imply that for all $r$ large enough, we have
	\begin{align*}
	\limsup_{\beta\downarrow 0} \beta^{4-d} \log \P\big(\lambda_1(Q_{3r/2}, \beta^{2 - d/2} \xi)  < -1 - \delta_{\beta} + K/r^2\big) &\leq - I_{3r/2}((-\infty, -1 + K/r^2]) < - \rho(1 - \eta)\;,\\
	\liminf_{\beta\downarrow 0} \beta^{4-d} \log \P\big(\lambda_1(Q_{r}, \beta^{2 - d/2} \xi)  < -1- \delta_{\beta} \big) &\geq - I_{r}((-\infty, -1)) > -\rho(1 + \eta)\;.
	\end{align*}
	We can thus find $\beta_0 \le \beta_1$ such that for all $\beta \le \beta_0$ (that is, for all $x\ge x_0 := \beta_0^{-2}$):
	\[1 - c_2 \left(\frac{L}{\beta r}\right)^d e^{-(1-\eta) \rho \beta^{-4+d}} \leq \P\big(\lambda_1(Q_L, \xi) \geq -x\big) \leq \left(1 - e^{-(1 + \eta)\rho \beta^{-4 + d}}\right)^{c_1 (\frac{L}{\beta r})^d}. \]
	Since we have $e^{-2y} \le 1 - y \le e^{-y}$ for all $y\ge 0$ small enough, the previous inequalities yield (up to possibly diminishing $\beta_0$):
	\[\exp\left(-\frac{2c_2}{\beta^d r^d}  e^{d\log L -(1 - \eta)\rho \beta^{-4 + d}}\right) \leq \P(\lambda_1(Q_L, \xi) \geq -x) \leq \exp\left(-\frac{c_1}{\beta^d r^d}  e^{d\log L - (1 + \eta)\rho \beta^{-4 + d}}\right).\]
	Setting $\gamma_1 = c_1/r^d$, $\gamma_2 = 2c_2/r^d$, and replacing $\beta$ by $1/\sqrt x$, we obtain the desired bound for $n=1$.
\end{proof}

With Theorem \ref{thm:tail_estimates} at hand, we are now able to prove our main result.
\begin{proof}[Proof of Theorem \ref{thm_asymp}]
	In this proof, we set for convenience the quantity $a_L := (C_d \log L)^{\frac1{2-d/2}}$ for $L\ge 1$ (which is the first term in \eqref{eq:a_L}). Assume that for all $\delta > 0$ and all $n\ge 1$
	\begin{equation}
	\P\left( \liminf_{m\to\infty} \left\{ -(1+\delta)\,a_{2^m} \le \lambda_{n, 2^m} \le - (1-\delta)\,a_{2^m} \right\} \right) = 1\;.
	\label{eq:obj}
	\end{equation}
	To deduce the statement of the theorem, it suffices to extend this asymptotic from $L\in \{2^m,m\ge 1\}$ to general $L\in [1,\infty)$. This can be done as follows. For all $L \ge 1$, there exists $m \in \N$ such that $2^m \leq L < 2^{m + 1}$. By item 2. of Proposition \ref{Prop:Construct}, on the event $\Omega_0(1)$ and provided $\lambda_{n,2^m} \le 0$ we have
	\[ \frac{\lambda_{n, 2^{m+1}}}{a_{2^{m}}} \leq \frac{\lambda_{n, L}}{a_L} \leq \frac{\lambda_{n, 2^{m}}}{a_{2^{m+1}}} \;.\]
	Since $a_{2^m} / a_{2^{m+1}} \to 1$ as $m\to\infty$, we deduce from \eqref{eq:obj} that the middle term goes to $-1$ almost surely as $L\to\infty$.\\
	We are left with the proof of \eqref{eq:obj}. By item 3. of Proposition \ref{Prop:Construct} we have on the event $\Omega_0(1)$ and for all $n\ge 1$
	\begin{equation}\label{Eq:1ni}
	\lambda_{1, L} \leq \lambda_{n, L} \leq \max_{1 \leq i \leq n} \lambda_{(i)}
	\end{equation}
	where $\lambda_{(i)}$ is the principal eigenvalue of the operator $\cH(Q_{(i)},\xi)$ and the boxes $Q_{(i)}$ are $n$ disjoint sub-boxes of $Q_L$ whose side-lengths are $L/n$. By item 3. of Proposition \ref{prop:scaling}, the $\lambda_{(i)}$'s are i.i.d.~with the same law as $\lambda_{1, L/n}$. Specialising these inequalities to $L=2^m$, we deduce that \eqref{eq:obj} follows from
	\begin{align}
	&\P\left( \limsup_{m\to\infty} \left\{ \lambda_{1, 2^m} < -(1+\delta)\,a_{2^m} \right\} \right) = 0\;,\label{Eq:limsup1}\\
	&\P\left( \limsup_{m\to\infty} \left\{  \max_{1 \leq i \leq n} \lambda_{(i)} > - (1-\delta)\,a_{2^m} \right\} \right) = 0\;.\label{Eq:limsup2}
	\end{align}
	Take $\eta > 0$ such that $(1-\eta)(1+\delta)^{2-\frac{d}{2}} > 1$. By Theorem \ref{thm:tail_estimates} and using the inequality $1-e^{-y} \le y$ that holds for all $y\in\R$, we find for all $m\ge 1$ large enough
	\begin{align*}
	\P\left(\lambda_{1, 2^m} < -(1+\delta)\,a_{2^m}\right) &= 1-\P\left(\lambda_{1,2^m} \ge -(1+\delta)\,a_{2^m} \right)\\
	&\le \gamma_2 (1+\delta)^{d/2} \left(C_d \log 2^m\right)^{\frac{d/2}{2- d/2}} 2^{-md[ (1-\eta)(1+\delta)^{2-\frac{d}{2}}  - 1]}\;.
	\end{align*}
	The Borel-Cantelli Lemma allows to deduce \eqref{Eq:limsup1}.\\
	Recall that the $\lambda_{(i)}$'s are i.i.d.~with the same law as $\lambda_{1,2^m/n}$ so that
	$$\P\left(\max_{1 \leq i \leq n} \lambda_{(i)} > -(1-\delta)\,a_{2^m}\right) \le n \P\left(\lambda_{1, 2^m / n} > -(1-\delta)\,a_{2^m}\right)\;.$$
	Take $\eta > 0$ such that $(1+\eta)(1-\delta)^{2-\frac{d}{2}} < 1$. By Theorem \ref{thm:tail_estimates} again, we thus find for all $m\ge 1$ large enough
	\begin{align*}
	\P\left(\lambda_{1, 2^m / n} > -(1-\delta)\,a_{2^m} \right) \le \exp\left[ - \gamma_1 n^{-d} (1-\delta)^{d/2} \left(C_d \log 2^m\right)^{\frac{d/2}{2- d/2}} 2^{md[1- (1+\eta)(1-\delta)^{2-\frac{d}{2}}]} \right]\;.
	\end{align*}
	Applying again the Borel-Cantelli Lemma, we deduce \eqref{Eq:limsup2}, thus concluding the proof of Theorem \ref{thm_asymp}.
\end{proof}

\subsection{About Conjecture \ref{Conj:2}}
Recall that the optimizers of the Gagliardo-Nirenberg inequality are exactly the functions $aQ(bx+c)$ with $a,b\in \R\backslash\{0\}, c\in\R^d$ where $Q$ is the unique positive radial solution of $-\Delta Q - Q^3 = -Q$.
\begin{proposition}\label{prop:optimizers}
	The optimizers of \eqref{Eq:Sup} coincide with the set of functions $\{\pm w_*( \cdot + c), c \in \R^d\}$, where
	\[w_* := \frac{\mu^{d/2}}{\|Q\|_{L^2}} Q(\mu\,\cdot), ~~ \mu = \Big(\frac{C_d}{2d}\Big)^{\frac1{4-d}} \;.\]
\end{proposition}
\begin{proof}
	From Lemma \ref{lem:problem-variationnel}, we know that any optimizer $w$ is of the form $w = \lambda^{d/2} u(\lambda \cdot)$ where $\lambda^{2-d/2} = \frac{d \norm{u}_{L^4}^2}{4\norm{\nabla u}_{L^2}^2}$ and $u$ is a Gagliardo-Nirenberge optimizer with unit $L^2$-norm. Also, from the property recalled at the beginning of this subsection, any such $u$ takes the form $aQ(bx + c)$ with $a = \pm |b|^{d/2} \|Q\|_{L^2}^{-1}$, $b \in \R\backslash \{0\}$ and $c\in\R^d$. A computation then shows that $w$ is an optimizer to \eqref{Eq:Sup} if and only if it is of the form $\pm\frac{|b\lambda|^{d/2}}{\norm{Q}_{L^2}}  Q(b\lambda \cdot + c)$ where $|b\lambda|$ is a fixed value given by $\left(\frac{d \norm{Q}_{L^4}^2}{4\norm{\nabla Q}_{L^2}^2}\right)^{\frac{2}{2-d/2}}$. Set $w_* = \frac{|b\lambda|^{d/2}}{\norm{Q}_{L^2}} Q(|b\lambda| \cdot)$. Then indeed all optimizers to \eqref{Eq:Sup} are given by $\{\pm w_*( \cdot + c), c \in \R^d\}$.
	
	The value of $|b\lambda|$ can be determined by an indirect argument. Recall that \eqref{Eq:Sup} can be reformulated into the constrained variational problem $\tilde{J}(w, t) = J(w) - t (\norm{w}_{L^2}^2 - 1)$, where $J$ is the functional introduced in the proof of Lemma \ref{lem:problem-variationnel} and $t \in \R$ is the Lagrange multiplier. Routine variational calculus arguments (taking $\delta \tilde{J} = 0$, see for example \cite[page 9]{Amb92}) show that any optimizer $w$ satisfies the PDE
	\begin{equation*}
	\begin{cases}
	-\Delta w - \frac{w^3}{\|w\|_{L^4}^2} = - \Big(\frac{C_d}{2d}\Big)^{\frac1{2-d/2}}w\\
	\norm{w}_{L^2} = 1
	\end{cases}
	\end{equation*}
	of which $w_*$ is a positive, radial solution. Set $v(x) = (\mu \|w_*\|_{L^4})^{-1} w(x/\mu)$ with $\mu$ as in the statement. Recall from the introduction that $\norm{Q}_{L^4}^4 = \frac{2d}{C_d}$. A computation shows that $v$ is a positive, radial solution of
		\[\begin{cases}
		-\Delta {v} - {v}^3 = - {v}\\
		\norm{{v}}_{L^4}^4 = \mu^{d-4} = \frac{2d}{C_d}\;,
	\end{cases}\]
	and therefore $v= Q$. This leads to the identity
	\[ v(x)=\Big(\frac{|b\lambda|}{\mu}\Big)^{d/4} Q(|b\lambda| x) = Q(x)\;,\]
	which evaluated at $x=0$, and given $Q(0) \ne 0$ since $Q$ is positive, ensures that $|b\lambda| = \mu$ and therefore
	\[ w_*(x) = \frac{|b\lambda|^{d/2}}{\norm{Q}_{L^2}} Q(|b\lambda| x) = \frac{\mu^{d/2}}{\|Q\|_{L^2}} Q(\mu x)\;.\]
%
%
%
%
%
\end{proof}

Take $\beta^2 = 1/a_L$. By the rescaling property stated in Proposition \ref{prop:scaling}, the event $\{ \lambda(Q_L,\xi) \asymp -\beta^{-2}\}$ is ``equivalent'' to the event $\{ \lambda(Q_{L/\beta},\beta^2 \xi(\cdot \beta)) \asymp -1\}$. (Recall that $\beta^2 \xi(\cdot \beta)$ has the same law as $\beta^{2-\frac{d}{2}} \xi$.) By the subdivision into sub-boxes of Proposition \ref{prop:boites}, the latter event ``coincides'' with
$$\Big\{ \min_{|k| \le \frac{L}{2\beta r} } \lambda\big(kr + Q_{r},\beta^2 \xi(\cdot \beta)\big) \asymp -1 \Big\}\;.$$
The r.v.~involved in the $\min$ are independent. For such an event to be satisfied, typically only one of these r.v.~is of order $-1$. The large deviation estimate of Proposition \ref{prop:PGD} shows that the probability of $\{ \lambda(Q_{r},\beta^2 \xi(\cdot \beta)) \asymp -1\}$ is roughly
$$ \exp(-\beta^{d-4} I_r(-\infty,-1))\;.$$
Heuristically, to achieve $\{ \lambda(Q_{r},\beta^2 \xi(\cdot \beta)) \asymp -1\}$, one needs $\beta^2 \xi(\cdot \beta) \asymp V_*$ where $V_*$ is the argmin of $I_r$. Taking $r$ large enough, this argmin should be close to the argmin of $I_\infty$ and we can thus assume that $V_*$ is the argmin of $I_\infty$.\\
The computations in the proofs of Proposition \ref{prop:rho} and Lemma \ref{lem:problem-variationnel} show that $V_*(y) = \mu^{-2} W_*(y/\mu)$, with $\mu =\Big(\frac{C_d}{2d}\Big)^{\frac1{4-d}}$ and $W_*(u) = -\frac{w^2_*(u)}{\|w_*\|_{L^4}^2}$ where $w_*$ is the radial optimizer of \eqref{Eq:Sup} with unit $L^2$ norm defined in Proposition \ref{prop:optimizers}. From Proposition \ref{prop:optimizers}, we deduce $W_* = - \mu^2 Q^2(\mu \cdot)$ and that $V_* = -Q^2 = -\frac{{\psi_*}^2}{\|\psi_*\|_{L^4(\R^d)}^2} \sqrt{\frac{2d}{C_d}}$ where $\psi_* = Q / \|Q\|_{L^2}$. Note that $\lambda(\R^d,V_*) = -1$ and that
\[ -\Delta \psi_* +V_* \psi_* = -\psi_*\;,\]
so that $\psi_*$ is the eigenfunction associated to the smallest eigenvalue of $-\Delta + V_*$.\\
The above discussion suggests that $\xi$ has a deterministic behavior at space-scale $1/\sqrt{a_L}$ around $U_{n,L}$:
\[ \bigg(\frac1{a_L} \xi\Big(U_{n,L} + \frac{x}{\sqrt{a_L}}\Big), x\in \R^d\bigg) \Rightarrow V_*\;.\]
If we assume that for some function $\psi$ (of unit $L^2$ norm) we have
\begin{align*}
\bigg(\frac1{a_L^{d/4}} |\varphi_{n,L}|\Big(U_{n,L} + \frac{x}{\sqrt{a_L}}\Big), x\in \R^d\bigg) \Rightarrow \psi\;,
\end{align*}
then a formal passage to the limit as $L\to\infty$ on
$$ -\Delta \varphi_{n,L} + \xi \varphi_{n,L} = \lambda_{n,L} \varphi_{n,L}\;,$$
yields
$$ - \Delta \psi + V_* \psi = -\psi\;,$$
and we deduce that $\psi=\psi_*$.

\section{Regularity structures and the large deviations estimate}\label{sec:LDP}
Let us briefly summarize the construction of $\cH = \cH(Q,\beta\xi)$ carried out in \cite{Lab19}. It consists in constructing the resolvents $(\cH - a)^{-1}$ associated to this operator. The advantage of dealing with the resolvents is that they satisfy, at least formally, an SPDE that one can hope to solve. Indeed, formal computations show that for any $g \in L^2(Q)$, $(\cH + a)^{-1}g$ should be a fixed point of the map
\begin{equation}\label{Eq:FixedPt} u \mapsto (-\Delta + a)^{-1} g - (-\Delta + a)^{-1} (\beta \xi u)\;,\end{equation}
where $(-\Delta + a)^{-1}$ is the resolvent of the Laplacian endowed with Dirichlet b.c.~on $Q$.
While the above procedure can be performed by standard arguments when $\xi$ is smooth, the case of white noise is singular. Indeed, an iteration of the fixed point map yields a term with the same regularity as $\xi \cdot (-\Delta + a)^{-1} \xi$, and it happens that such a quantity blows up in dimension $d\ge 2$.\\

This is where the theory of regularity structures~\cite{Hai14} is applied. The first ingredient is the notion of \emph{model}: this is a random object that encapsulates the values of the noise but also of non-linear functionals associated to it. For smooth driving noise $\xi$, there is an associated \emph{canonical model} that can be constructed generically. On the other hand, for singular $\xi$ such as white noise, some non-linear functionals are ill-defined and the construction of the model is performed through a limiting procedure: starting from a regularised noise $\xi_\eps$ for which all non-linear functionals are well-defined, one subtracts some renormalisation constants that typically diverge as $\eps\downarrow 0$, but that allow to pass to the limit on the non-linear functionals.\\
The second ingredient is a calculus developed at the level of the so-called modelled distributions, that allows to prove existence and uniqueness of fixed point maps such as \eqref{Eq:FixedPt}: this part of the theory will not be needed in the present work.

\subsection{Regularity structures}\label{Subsec:RS}

\begin{definition}
A regularity structure is a triplet $(\cA, \cT, \cG)$ with the following properties:
\begin{enumerate}
	\item $\cA \subset \R$ is a locally finite set of indices that is bounded from below and contains $0$.
	\item $\cT = \bigoplus_{\alpha \in \cA} \cT_\alpha$ is a graded vector space, where for each $\alpha \in \cA$, $\cT_\alpha$ is a finite-dimensional Banach space equipped with the norm $\norm{\cdot}_{\alpha}$. In particular, we demand $\cT_0 \simeq \R$ with the unit vector $\un$. For any vector $\tau$ in a finite-dimensional subspace of $\cT$, we write $\norm{\tau}$ to be its Euclidean norm. 
	\item $\cG$ is a group acting on $\cT$ such that every element $\Gamma$ of $\cG$ satisfies $\Gamma \un = \un$ and for all $\tau \in \cT_{\alpha}$, $\Gamma\tau - \tau \in \cT_{<\alpha} := \bigoplus_{\beta \in \cA_{<\alpha}} \cT_{\beta}$ where $\cA_{<\alpha} = \cA \cap (-\infty, \alpha)$.
\end{enumerate}
\label{def:reg_struct}
\end{definition}

The prototype for Definition \ref{def:reg_struct} is the polynomial regularity structure, where $\cA = \N$, $\cT_n$ is the vector space spanned by all $X^k$ such that $|k|= n$ and $\cG$ is the group formed by all the transformations $\Gamma_h$ that translate any polynomial $P$ by the vector $h$: $\Gamma_h P(X) = P(X+h)$. (Note that we have used the multi-index notation $X^k = X_1^{k_1} \dots X_d^{k_d}$ and $|k| = k_1 + \dots + k_d$.)\\

Let us now introduce the regularity structure associated to the Anderson Hamiltonian, which is an ``enlargement'' of the polynomial regularity structure. We consider the abstract symbol $\Xi$ that represents $\xi$ at the level of the model space $\cT$. We define the sets $\cF$ and $\cU$, as the smallest sets of symbols such that $\cF$ contains the noise symbol $\Xi$, $\cU$ contains all polynomials $X^k$ and
\[\tau \in \cU \implies \tau \Xi \in \cF ~~\text{and}~~ \tau \in \cF \implies \cI(\tau) \in \cU.\]
Here $\tau\Xi$ and $\cI(\tau)$ denote new symbols that need to be considered. In some sense, $\cU$ allows to describe the solution $u$ of the fixed point problem while $\cF$ allows to describe the product $u\xi$.

Each expression $\tau$ is assigned a number $|\tau|$ called homogeneity, which is calculated by the following rules: (1) $|X^k| = |k|$, (2) $|\Xi| = -d/2 - \kappa$ for some fixed $\kappa \in (0, 1/8)$, (3) for any $\tau, \tau'$, $|\tau\tau'| = |\tau| + |\tau'|$, (4) for any $\tau$, $|\cI(\tau)| = |\tau| + 2$. Note that $\cI$ stands for an integration map associated to our convolution kernel (the Green function of $-\Delta + m$ for some $m>0$, see below), which improves regularity by $2$ and this explains (4).\\
Let therefore $\cA := \{|\tau|: \tau \in \cF \cup \cU\}$ and for $\alpha \in \cA$, and let $\cT_{\alpha}$ be the vector space spanned by all $\tau \in \cF \cup \cU$ such that $|\tau| = \alpha$. For the construction of the structure group $\cG$, we refer the reader to \cite[Sec. 8.1]{Hai14}.\\
From now on, we will \emph{always} restrict $\cU$ (resp.~$\cF$) to symbols whose homogeneities are below some $\gamma \in (3/2, 2-4\kappa)$ (resp.~$\gamma - d/2 - \kappa$). This yields (in the order of increasing homogeneity):
\[\cU =
\begin{cases}
\left\{ \un, X_i, \cI(\Xi) \right\} & \text{ if } d = 1\\
\left\{ \un, \cI(\Xi), X_i \right\} & \text{ if } d = 2\\
\left\{ \un, \cI(\Xi), \cI(\Xi\cI(\Xi)), X_i, \cI(\Xi\cI(\Xi\cI(\Xi))), \cI(\Xi X_i) \right\} & \text{ if } d = 3
\end{cases}\]
On the other hand, the collection $\cF$ is obtained by multiplying the elements in $\cU$ by $\Xi$. The regularity structure $(\cA,\cT,\cG)$ is now fixed once and for all.
\\

We can define the notion of \emph{admissible models} that, roughly speaking, map abstract objects in $\cT$ to genuine distributions. Below we denote by $\ccB^r$ the space of all $\varphi \in C^{r}$ supported in the unit ball of $\R^d$ and such that $\norm{\varphi}_{C^r} \leq 1$, for some $r > 0$. We also introduce the notation $\varphi^{\lambda}_x = \lambda^{-d}\varphi((\cdot - x)/\lambda)$. For every $m>0$, let $P^{(m)}$ be the Green's function of $-\Delta + m$ on $\R^d$, see~\cite[Sec. 3]{Lab19} for its explicit expression. Following~\cite[Def 2.1]{Lab19}, we consider a decomposition of the kernel $P^{(m)}$ into the sum $P^{(m)}_- + P^{(m)}_+$ where $P^{(m)}_-$ is a smooth function that coincides with $P^{(m)}$ outside $B(0,2^{-n_m+1})$ and $P^{(m)}_+$ is compactly supported in $B(0,2^{-n_m+1})$ and coincides with $P^{(m)}$ in $B(0,2^{-n_m-1})$. Here $n_m$ is the smallest integer such that $2^{-n_m} \le 1/\sqrt m$. We refer to Lemma \ref{lem:kernel} for more details.

\begin{definition}
Fix $\gamma \in (3/2,2-4\kappa)$ , $r > -\min \cA = d/2+\kappa$ and $m>0$. An admissible model associated to $P^{(m)}_+$ is a couple $Z = (\Pi, \Gamma)$, where $\Pi = (\Pi_x)_{x \in \R^d}$ is a collection of maps from $\cT$ to the space of distributions $\ccD'(\R^d)$, $\Gamma = (\Gamma_{x, y})_{x, y \in \R^d}$ is a collection of elements of $\cG$, that satisfy the following conditions:
\begin{itemize}
	\item $\Gamma_{x, z} = \Gamma_{x, y} \Gamma_{y, z}$ and $\Pi_{x} = \Gamma_{x,y} \Pi_{y}$ for $x, y, z \in \R^d$.
	\item We have for every $k\in \N^d$, $\Pi_x X^k (y) = (y - x)^k$ together with
	\begin{align*}
	\Pi_x \cI(\tau)(y) = (P^{(m)}_+*\Pi_x \tau)(y) - \sum_{k\in \N^d:|k| < |\cI(\tau)|} \partial^k (P^{(m)}_+*\Pi_x\tau)(x) \frac{(y-x)^k}{k!}\;.
	\end{align*}
	\item For any given compact set $Q \subset\R^d$, we have
	\[ \normm{\Pi^{(m)}}_Q := \sup_{x \in Q} \norm{\Pi^{(m)}}_x < \infty\;,\]
	where
	\[ \norm{\Pi^{(m)}}_{x} := \sup_{\varphi \in \ccB^r} \sup_{\lambda \in (0, 2^{-n_m}]} \sup_{\alpha \in \cA_{<\gamma}} \sup_{\tau \in \cT_\alpha} \frac{\left|\crochet{\Pi^{(m)}_x \tau, \varphi^{\lambda}_x}\right|}{\norm{\tau}_\alpha \lambda^\alpha}\;.\]
	\item For any given compact set $Q \subset\R^d$, we have
	\[ \normm{\Gamma^{(m)}}_Q := \sup_{x, y \in Q : |x - y| \leq 2^{-n_m}} \norm{\Gamma^{(m)}}_{x, y} < \infty\;,\]
	where
	\[ \norm{\Gamma^{(m)}}_{x, y} := \sup_{\alpha \le \beta \in \cA_{<\gamma}}\sup_{\tau \in \cT_\beta}\frac{\norm{\Gamma_{x, y} \tau}_{\alpha}}{\norm{\tau}_\beta |x - y|^{\beta - \alpha}}\;.\]
\end{itemize}
\label{def:model}
\end{definition}

For any box $Q \subset \R^d$, we set
\[\normm{Z^{(m)}}_Q := \normm{\Pi^{(m)}}_Q + \normm{\Gamma^{(m)}}_Q\;.\]
Denote by $\cM_m$ the space of all admissible models with respect to $P^{(m)}_+$. We equip the space $\cM_m$ with the pseudometric
\begin{equation}
\normm{Z;\bar{Z}}_Q := \normm{\Pi - \bar{\Pi}}_Q + \normm{\Gamma - \bar{\Gamma}}_Q
\end{equation}
for $Z = (\Pi, \Gamma)$, $\bar{Z} = (\bar{\Pi}, \bar{\Gamma})$ elements of $\cM_m$ and for any given box $Q$. We can thus consider the quotient space $\cM_{m,Q}$ associated to this pseudometric.\\

For any $V\in L^2_{\tiny\mbox{loc}}(\R^d)$, there exists a unique admissible model $Z^{(m)}(V)=(\Pi^V,\Gamma^V)$ that satisfies $\Pi^V_x \Xi(y) = V(y)$ and for all $\tau \in \cU$, $\Pi^V_x \tau\Xi = (\Pi^V_x \tau)(\Pi^V_x \Xi)$, see Appendix \ref{Appendix:Canonical}. In particular, $V$ can be taken equal to the regularised noise $\beta \xi_\eps := \beta \xi * \rho_\eps$. To alleviate the notations, we will omit the superscript $V$ of $\Pi$ and $\Gamma$, and only express the dependency on the potential function through the notation $Z^{(m)}(V)$ if necessary.\\
Unfortunately, in dimensions $2$ and $3$ the corresponding model does not converge as $\eps\downarrow 0$ to an admissible model. However, it was proved in~\cite{Lab19} that one can build a \emph{renormalised model} $Z^{(m)}_\eps(\beta \xi_\eps)$ associated to $\beta\xi_\eps$ that converges in probability to an admissible model that we denote $Z^{(m)}(\beta\xi)$. This last model can be interpreted as the model associated with the white noise $\beta\xi$. We refer to Appendix \ref{Sec:Renorm} for the definitions of the renormalisation constants and some details on the renormalisation procedure.\\
In fine, for every $m > 0$, for every box $Q$ and every $\delta > 0$, we have
\begin{equation}\label{Eq:CVModel}
\lim_{\eps \to 0} \P\left( \normm{Z^{(m)}_\eps(\beta \xi_\eps); Z^{(m)}(\beta \xi) }_{Q} > \delta \right) = 0.
\end{equation}
For convenience we sometimes only write the shorthands $Z^{(m)}$ and $Z_\eps^{(m)}$ without specifying the potential.

Now fix $\beta \xi$ as potential. For later use, we introduce a sequence $(\eps_k)_{k\ge 1}$ and an event $\Omega'$ of probability one on which we have convergence \emph{simultaneously} on all boxes $Q$ of the renormalised models $Z^{(m)}_{\eps_k}$ towards $Z^{(m)}$. 
It follows from a diagonal argument. Recall that $Q_L = (-L/2, L/2)^d$. We construct the sequence $(\eps_k)_{k \ge 1}$ as follows:
\begin{enumerate}
	\item For $n = 1$, we pick a sequence $(\eps_{1,k})_{k\ge 1}$ such that $\P\left(\normm{Z^{(m)}_{\eps_{1,k}}; Z^{(m)}}_{Q_1} \to 0\right) = 1$.
	\item Having chosen the sequence $(\eps_{n, k})_{k\ge 1}$ such that $\displaystyle\P\left(\bigcap_{i = 1}^n \left\{\normm{Z^{(m)}_{\eps_{i,k}}; Z^{(m)}}_{Q_i} \to 0\right\}\right) = 1$, we choose a subsequence $(\eps_{n+1, k})_{k\ge 1}$ of $(\eps_{n, k})_{k\ge 1}$ such that $\displaystyle\P\left(\bigcap_{i = 1}^{n+1} \left\{\normm{Z^{(m)}_{\eps_{i,k}}; Z^{(m)}}_{Q_i} \to 0 \right\}\right) = 1$.
	\item Take $\eps_k := \eps_{k, k}$.
\end{enumerate}
Let $\Omega_m := \{\forall \mbox{ box }Q: \normm{Z^{(m)}_{\eps_{k}}; Z^{(m)}}_{Q} \to 0 \quad\mbox{ as } k\to\infty\}$. Since $(\eps_k)_{k\ge n}$ is a subsequence of $(\eps_{n,k})_{k\ge n}$ we obtain
\begin{align*}
\P(\Omega_m^\cc) &= \P\left( \exists Q : \normm{Z^{(m)}_{\eps_{k}}; Z^{(m)}}_{Q} \not\to 0 \quad\mbox{ as } k\to\infty \right)\\
&= \P\left(\exists n\ge 1: \normm{Z^{(m)}_{\eps_{k}}; Z^{(m)}}_{Q_n} \not\to 0 \quad\mbox{ as } k\to\infty\right)\\
&\leq \P\left(\exists n\ge 1: \normm{Z^{(m)}_{\eps_{n,k}}; Z^{(m)}}_{Q_n} \not\to 0 \quad\mbox{ as } k\to\infty\right) = 0\;,
\end{align*}
i.e. $\P(\Omega_m) = 1$ for all $m \in \N$. We thus define the event $\Omega' := \cap_{m\ge 1} \Omega_m$ of probability one.

%

\subsection{Construction of the operator}\label{Subsec:Operator}

The operator $\cH(Q_L,\beta\xi)$ will be defined as the limit of the operators $-\Delta + \beta \xi_\eps + C_\eps(\beta)$ as $\eps\downarrow 0$, where $C_\eps(\beta)$ is the renormalisation constant introduced in Appendix \ref{Sec:Renorm}. The limit will be taken in the norm resolvent sense: for all $a$ large enough, the operator $(-\Delta + \beta\xi_\eps + C_\eps(\beta) + a)^{-1}$ converges in norm to some limit that we denote $(-\Delta + \beta\xi + a)^{-1}$. This limit happens to be invertible and self-adjoint, and the operator $-\Delta + \beta\xi$ can then be defined by inversion.\\

Recall that $\cM_{m,Q}$ is the space of all admissible models associated with the convolution kernel $P^{(m)}_+$ and restricted to $Q$. For any constant $K>0$ we define the following subset
$$ \cM_{m,Q,K} := \{ Z\in \cM_{m,Q}: \normm{Z; Z(0)}_{Q} \leq K \}\;,$$
where $Z^{(m)}(0)$ denotes the admissible model associated with the null potential $V \equiv 0$ and the kernel $P^{(m)}_+$. Note that $\cM_{m, Q, K}$ is a closed subset of $\cM_{m, Q}$.

\begin{proposition}\label{Prop:Resolv}
	Fix $m>0$ and a box $Q$. There exists a constant $K=K(m) > 0$ and a continuous map $\Phi_{m,Q}$ from $\cM_{m,Q,K} \times (-2,2)$ into the set of bounded operators on $L^2(Q)$ endowed with the operator norm topology, such that:\begin{enumerate}
	\item for any $V\in L^2(Q)$, if the canonical model $Z(V)$ belongs to $\cM_{m,Q,K}$ then for any $b\in (-2,2)$ we have $\Phi_{m,Q}(Z,b) = (-\Delta + V + m + b)^{-1}$,
	\item if the renormalised model $Z^{(m)}_\eps(\beta \xi_\eps)$ belongs to $\cM_{m,Q,K}$, then for any $b\in (-2,2)$ we have $\Phi_{m,Q}(Z^{(m)}_\eps,b) = (-\Delta + \beta\xi_\eps + C_\eps(\beta) + m + b)^{-1}$.
\end{enumerate}
	Moreover, the constant $K(m)$ increases to infinity with $m$.
\end{proposition}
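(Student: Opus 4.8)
The plan is to realize the resolvent as the outcome of the abstract fixed point map \eqref{Eq:FixedPt} lifted to the level of modelled distributions, and then to invoke the continuity of the reconstruction and solution maps in the theory of regularity structures~\cite{Hai14, Lab19}. Concretely, for a fixed $m>0$, a box $Q$, an admissible model $Z=(\Pi,\Gamma)\in\cM_{m,Q}$ and a parameter $b\in(-2,2)$, I would consider for each $g\in L^2(Q)$ the fixed point problem in the space $\cD^{\gamma,\eta}$ of modelled distributions (with a suitable weight $\eta$ near $\partial Q$, exactly as in~\cite{Lab19}):
\[ U \;=\; \cP^{(m)}_b\!\left(-\beta\, U\Xi + R g\right)\;, \]
where $\cP^{(m)}_b$ is the abstract integration operator against $P^{(m)}_+$ shifted by $b$ (built from $\cI$ plus the polynomial corrections, incorporating the far-field part $P^{(m)}_{b,-}$ as a smooth convolution), and $Rg$ is the canonical lift of the function $g$. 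Since $\gamma\in(3/2,2-4\kappa)$ and $|\Xi|=-d/2-\kappa$, the product $U\Xi$ lands in a space of modelled distributions of regularity $>-2$, so $\cP^{(m)}_b$ raises it back into $\cD^{\gamma,\eta}$ and the map is a contraction for the model norm in $\cM_{m,Q,K}$ provided the problem is restricted to a small enough time/space horizon — here there is no time, so the contraction is obtained by choosing $m$ (equivalently $n_m$) large, which is precisely why $K=K(m)\to\infty$: a larger $m$ shrinks the operator norm of $\cP^{(m)}_b$ on the relevant spaces. One then defines $\Phi_{m,Q}(Z,b)\,g := \mathcal{R}U$, the reconstruction of the fixed point; linearity in $g$ and the $L^2\to L^2$ bound follow from the a priori estimates on $\cD^{\gamma,\eta}$ together with the reconstruction theorem, giving a bounded operator on $L^2(Q)$. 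Continuity of $(Z,b)\mapsto \Phi_{m,Q}(Z,b)$ in the operator norm is then a consequence of the joint local Lipschitz dependence of the fixed point and of reconstruction on the model and on the kernel (the $b$-dependence enters only through the smooth, analytically-varying kernel $P^{(m)}_{b,\pm}$).

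For item 1, when $Z=Z^{(m)}(V)$ is the canonical model built from $V\in L^2(Q)$, the abstract fixed point problem projects, under reconstruction, onto the classical integral equation $u = (-\Delta+V+m+b)^{-1}g$ — this is the content of the formal computation around \eqref{Eq:FixedPt}, made rigorous by the fact that for canonical models reconstruction commutes with products and with the smooth convolution $P^{(m)}_+*(\cdot)$; hence $\Phi_{m,Q}(Z^{(m)}(V),b)=(-\Delta+V+m+b)^{-1}$ as claimed (for $V$ smooth by direct verification, then for $V\in L^2(Q)$ by the continuity already established and density, using that $L^2(Q)\ni V\mapsto Z^{(m)}(V)$ is continuous — see Appendix \ref{Appendix:Canonical} — and that $V\mapsto (-\Delta+V+m+b)^{-1}$ is continuous since $-\Delta+V$ is uniformly bounded below in $d\le 3$). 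Item 2 is identical in spirit but uses the renormalised model: by~\cite{Lab19}, solving the abstract equation against $Z^{(m)}_\eps(\beta\xi_\eps)$ and reconstructing produces exactly the solution of the renormalised classical equation, i.e. the resolvent of $-\Delta+\beta\xi_\eps+C_\eps(\beta)$; the renormalisation constant $C_\eps(\beta)$ is precisely the correction generated by the renormalised model when one expands $\mathcal{R}(U\Xi)$, so $\Phi_{m,Q}(Z^{(m)}_\eps,b)=(-\Delta+\beta\xi_\eps+C_\eps(\beta)+m+b)^{-1}$.

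The main obstacle is the contraction/boundedness step: one must show that the fixed point map on $\cD^{\gamma,\eta}$ is well-posed \emph{globally} on the box $Q$ (there is no small parameter analogous to a short time interval), and that the resulting reconstruction defines a map that is \emph{bounded from $L^2(Q)$ to $L^2(Q)$} with a norm controlled only by $m$ and $K$. This is where the choice of large $m$ is essential — it is the analogue of choosing a short time horizon — and where one must carefully track the weights $\eta$ near $\partial Q$ so that the Dirichlet boundary condition is respected and the reconstruction of the (negatively-regular) modelled distribution $Ug + (\text{remainder})$ still lies in $L^2$; these estimates are carried out in detail in~\cite{Lab19} and I would cite them, checking only that they are uniform over $\cM_{m,Q,K}\times(-2,2)$. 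A secondary technical point is to confirm that $n_m$, hence the kernel decomposition $P^{(m)}_+=P^{(m)}_++P^{(m)}_-$, can be chosen once and then perturbed by $b\in(-2,2)$ without spoiling admissibility — this is immediate since $\sqrt{m+b}\asymp\sqrt m$ for such $b$ when $m$ is large.
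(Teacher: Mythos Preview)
Your proposal is correct and follows essentially the same approach as the paper: the paper's proof is a one-line citation to~\cite[Prop.~3.7, Thm.~3, Lem.~4.5]{Lab19}, and you have accurately unpacked those references --- the abstract fixed point in $\cD^{\gamma,\eta}$, reconstruction, and the identification of the reconstructed solution with the classical/renormalised resolvent. Your remark that large $m$ plays the role of a short time horizon (shrinking the operator norm of $\cP^{(m)}_b$ and thereby allowing $K(m)\to\infty$) is exactly the mechanism behind the contraction in~\cite{Lab19}.
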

\begin{proof}
This is a consequence of the fixed point result~\cite[Prop. 3.7]{Lab19}, combined with the reconstruction theorem~\cite[Thm. 3]{Lab19} and some computations as in~\cite[Lem 4.5]{Lab19}.
\end{proof}

\begin{proof}[Proof of Proposition \ref{Prop:Construct}]
Fix $\beta > 0$. Properties 2. and 3. come from standard applications of the min-max formula when the potential is in $L^2(Q)$, in particular for $\beta\xi_\eps + C_\eps(\beta)$. Consequently, on the event where 1. holds, one can pass to the limit on $\eps_k$ and deduce 2. and 3. We are therefore left with the proof of Property 1.

Let us introduce the event
$$ \tilde{\Omega}_{m,n} := \{ Z^{(m)}(\beta \xi) \in \cM_{m,Q_n,\frac12 K(m)} \}\;.$$
It was proven\footnote{The proof is presented for $\beta = 1$ in that reference but the arguments are exactly the same for a general $\beta$.} in~\cite[Prop 4.1]{Lab19} that for any given $n\ge 1$ and any given constant $C>0$, we have $\P(Z^{(m)}(\beta \xi) \in \cM_{m,Q_n,C}) \to 1$ as $m\to \infty$ sufficiently fast so that $\P(\liminf_{m \to \infty} \tilde{\Omega}_{m, n}) = 1$ by Borel-Cantelli. Consequently, the event
$$ \tilde{\Omega} := \bigcap_{n\ge 1} \liminf_{m \to \infty} \tilde{\Omega}_{m,n}\;,$$
has probability one.

We now argue deterministically on the event $\Omega'\cap \tilde{\Omega}$ of probability one. For any box $Q$, taking $n\ge 1$ such that $Q \subset Q_n$, there exists $m_0 \ge 1$ such that for all $m \ge m_0$
$$ Z^{(m)} \in \cM_{m,Q_n,\frac12 K(m)} \subset \cM_{m,Q,\frac12 K(m)}\;,$$
and
$$ \normm{Z^{(m)}_{\eps_{k}}; Z^{(m)}}_{Q}  \to 0\;,\quad k\to\infty\;.$$
Consequently for all $k$ large enough we have
$$ Z^{(m)}_{\eps_k} \in \cM_{m,Q_n,K(m)} \subset \cM_{m,Q,K(m)}\;.$$
Note that for all $a > m_0 - 2$, we can always find a pair $(m, b) \in \N \times (-2, 2)$ such that $a = m + b$ and $m\ge m_0$. For all such $a$, we define
$$ \cG_{a} := \Phi_{m,Q}(Z^{(m)},b)\;,\quad \cG_{a}^{\eps_k} := \Phi_{m,Q}(Z^{(m)}_{\eps_k},b)\;.$$
Proposition \ref{Prop:Resolv} thus ensures that $\cG_{a}^{\eps_k}$ converges in norm to $\cG_{a}$ as $k\to\infty$. Note that this definition of $\cG_{a}$ does not depend on the choice of the pair $(m,b)$ made above: indeed, choosing some alternative parameters $(m',b')$ we have
\begin{align*}
\Phi_{m',Q}(Z^{(m')}_{\eps_k},b') &= (-\Delta + \beta \xi_{\eps_k} +  m' + C_{\eps_k}(\beta) + b')^{-1}\\
&= (-\Delta + \beta \xi_{\eps_k} + m + C_{\eps_k}(\beta)+ b)^{-1} = \Phi_{m,Q}(Z^{(m)}_{\eps_k},b)\;,
\end{align*}
so that by passing to the limit on $k$ we get $\Phi_{m,Q}(Z^{(m)},b) = \Phi_{m',Q}(Z^{(m')},b')$.\\
The arguments in~\cite[Prop 4.2]{Lab19} finally show that there exists a self-adjoint operator $\cH(Q,\beta\xi)$ with pure point spectrum bounded from below whose resolvent is given by $(\cH(Q,\beta\xi)+a)^{-1} = \cG_{a}$. In addition, $\cG_{a}^{\eps_k} = (\cH(Q,\beta\xi_{\eps_k}) + C_{\eps_k}(\beta)) + a)^{-1}$ converges in norm to $(\cH(Q,\beta\xi)+a)^{-1}$ thus concluding the proof.
\end{proof}

\subsection{Proof of Proposition \ref{prop:PGD}}

We now turn to the proof of the large deviations estimate of Proposition \ref{prop:PGD}. From now on, we work on the box $Q_L$ for some $L\ge 1$ fixed once and for all.

The proof relies on the following result of Hairer and Weber. We would like to point out that, although this result is originally spelled out in the regularity structure built for the dynamical $\Phi^4_3$ model, the proof relies on a large deviations principle for Wiener chaos, and, as mentioned on~\cite[p.59]{HW14}, it is applicable to all equations that can be treated with the theory of regularity structures, in particular to our case.

\begin{theorem}[Theorem 4.3 in \cite{HW14}]
	Fix $m \ge 1$. The collection $(Z^{(m)}(\beta\xi))_{\beta \in (0,1]}$ of admissible models in $\cM_{m,Q_L}$ satisfies a large deviation principle with rate $\beta^2$ and rate function
	\[I_{\cM}(Z) := \inf\left\{ \frac{1}{2} \norm{V}_{L^2(Q_L)}^2 : Z^{(m)}(V) = Z \right\}\]
	where $Z^{(m)}(V)$ is the canonical model associated to the deterministic noise $V$.
	\label{lem:PGD_modele}
\end{theorem}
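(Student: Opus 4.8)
\emph{Proof strategy.} This statement is imported from \cite{HW14}, where it is proved for the regularity structure of the dynamical $\Phi^4_3$ equation; what remains is to check that the underlying method applies verbatim to the (simpler) structure of Section~\ref{sec:LDP}. The first step is to observe that the renormalised model $Z^{(m)}(\beta\xi)$ appearing in \eqref{Eq:CVModel} is obtained from the noise $\beta\xi$, restricted to a fixed bounded neighbourhood of $Q_L$, through \emph{finitely many} operations: multiplication of the symbols in $\cU$ and $\cF$, convolution against $P^{(m)}_+$, the Taylor subtractions of Definition~\ref{def:model}, the construction of the structure group, and the renormalisation by the (deterministic) constants of Appendix~\ref{Sec:Renorm}. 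Consequently, for every $\eps$ each quantity $\crochet{\Pi^{(m)}_x\tau,\varphi^\lambda_x}$ and each matrix element of $\Gamma^{(m)}_{x,y}$ is a fixed deterministic functional of $\beta\xi_\eps$ lying in a Wiener chaos of bounded order, and the whole problem reduces to a large deviations principle, at rate $\beta^2$, for a finite vector of Wiener-chaos functionals of white noise.

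The second step is to invoke the large deviations principle for Wiener chaos --- the core input of \cite{HW14} --- which follows from Schilder's theorem for the Cameron--Martin space $L^2(Q_L)$ (up to a boundary collar, negligible because $P^{(m)}_+$ has small support) together with the continuity of the finite-order chaos-decomposition map, and to transport it through the assembly map $(\tau,x,y)\mapsto(\Pi^{(m)}_x\tau,\Gamma^{(m)}_{x,y})$ by the contraction principle. The point that requires genuine work is to identify the limiting rate function with the one in the statement, that is, with $\inf\{\frac12\norm{V}_{L^2(Q_L)}^2:Z^{(m)}(V)=Z\}$ built from the \emph{canonical} (unrenormalised) model. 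I would do this component by component, using the key computation that underlies \cite{HW14}: for a fixed Cameron--Martin shift $V$, the renormalised object evaluated at $\beta\xi+V$ equals the canonical object evaluated at $\beta\xi+V$ plus corrections that live in strictly lower chaos and carry a positive power of $\beta$; as $\beta\downarrow0$ these corrections vanish (exponentially fast at rate $\beta^2$, by the Gaussian tails of fixed Wiener chaos), and since the renormalisation is confined to the pure-noise part it is subtracted off inside a term that already tends to $0$. In the limit one is therefore left with the unrenormalised canonical model $Z^{(m)}(V)$, which --- because $Z^{(m)}(V)$ determines $V$ through $\Pi_x\Xi=V$ --- yields exactly the rate function $I_\cM$. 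Exponential tightness of $(Z^{(m)}(\beta\xi))_{\beta\in(0,1]}$ in the Polish space $\cM_{m,Q_L}$ comes for free, by applying Gaussian hypercontractivity to the moment bounds that underlie \eqref{Eq:CVModel} to upgrade them to exponential moment bounds on $\normm{Z^{(m)}(\beta\xi);Z^{(m)}(0)}_{Q_L}$.

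The main obstacle is the renormalisation, which in dimensions $2$ and $3$ involves constants that diverge with $\eps$. Two things must be handled with care: (i) the convergence \eqref{Eq:CVModel} must survive a Cameron--Martin shift, i.e.\ $Z^{(m)}_\eps(\beta\xi_\eps+h_\eps)\to Z^{(m)}(\beta\xi+h)$ with the \emph{same} renormalisation constants whenever $h_\eps\to h$ in $L^2$, uniformly over $L^2$-bounded shifts; this ``shifted model'' statement is not in \cite{Lab19} in that form but follows from the same estimates combined with the explicit expressions of Appendix~\ref{Sec:Renorm}. And (ii) one must check that the diverging renormalisation never contributes to the limiting rate function --- this is the component-by-component computation sketched above, which is where one uses that the renormalisation touches only pure-noise terms that are subleading at rate $\beta^2$. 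Beyond these two points the argument is a routine transcription of \cite[Thm.~4.3]{HW14}: our regularity structure involves fewer symbols, all of low homogeneity, so each individual Wiener-chaos estimate is strictly simpler than its $\Phi^4_3$ analogue. If a direct transcription turned out to be awkward, an alternative would be to run the Budhiraja--Dupuis weak-convergence scheme and prove the equivalent Laplace principle, whose only non-trivial ingredient is again the shifted-model convergence of point (i).
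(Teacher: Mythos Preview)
Your proposal is correct in spirit and in fact goes well beyond what the paper does. The paper does not prove this statement at all: it is imported wholesale from \cite{HW14}, with only the remark that the proof there, being based on a large deviations principle for Wiener chaos, applies to any regularity structure of the type constructed in \cite{Hai14}, and hence to the present one. No details of the adaptation are given in the paper.

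Your outline is essentially a faithful reconstruction of the strategy of \cite{HW14}: the reduction to finitely many fixed-order Wiener chaos functionals, Schilder's theorem on the Cameron--Martin space, the contraction principle, and the identification of the rate function with the canonical model by showing that the renormalisation corrections are subleading at rate $\beta^2$. The points you flag as requiring care --- the shifted-model convergence and the fact that diverging renormalisation constants do not contaminate the limiting rate function --- are exactly the right ones, and your treatment of them is correct. One small remark: the observation that $Z^{(m)}(V)$ determines $V$ via $\Pi_x\Xi=V$ makes the infimum in $I_\cM$ trivial (it is either $\frac12\|V\|_{L^2}^2$ or $+\infty$), which you might state explicitly; but this does not affect the argument.
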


For $m\ge 1$ introduce the event $E_{m,L}(\beta)$ on which the operator $\Phi_{m,Q_L}(Z^{(m)}(\beta \xi),0)$ is well-defined and positive, i.e.~$E_{m, L}(\beta)$ is the event on which
\begin{enumerate}
	\item the model $Z^{(m)}(\beta \xi)$ belongs to $\cM_{m,Q_L,K(m)}$, where $K(m)$ is the constant of Proposition \ref{Prop:Resolv},
	\item $\Phi_{m,Q_L}(Z^{(m)}(\beta \xi),0)$ is a positive operator on $L^2(Q_L)$.
\end{enumerate}
The event $E_{m,L}(\beta)$ occurs with high probability, as shown by the following estimate.
\begin{lemma}\label{Lemma:E}
	There exist some constants $\alpha, M,\nu >0$ such that for all $m \ge 1$ large enough, all $L\ge 1$ and all $\beta \in (0,1]$ we have
	\[ \P\left((E_{m,L}(\beta))^\cc\right) \le ML^d e^{-\frac{\alpha m^\nu}{\beta^2}}\;.\]
	\label{lem:lab_4.10}
\end{lemma}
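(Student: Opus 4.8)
The plan is to control separately the two failure modes in the definition of $E_{m,L}(\beta)$, namely $\{Z^{(m)}(\beta\xi)\notin\cM_{m,Q_L,K(m)}\}$ and the event that $\Phi_{m,Q_L}(Z^{(m)}(\beta\xi),0)$ fails to be positive. For the first, I would use the quantitative model bounds established in~\cite{Lab19}: the renormalised model $Z^{(m)}(\beta\xi)$ has stochastic norms $\normm{Z^{(m)}(\beta\xi);Z^{(m)}(0)}_{Q_L}$ whose moments decay polynomially in $m$ (the key point being that the kernel decomposition $P^{(m)}_\pm$ is cut off at scale $1/\sqrt m$, so the small-scale behaviour of the model is controlled and the constant $K(m)$ was arranged in Proposition~\ref{Prop:Resolv} to grow with $m$). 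The essential scaling input is that the white-noise prefactor $\beta$ enters the homogeneous Wiener chaos of order $j$ in the $j$-th stochastic object as $\beta^j$, so that $\beta\xi$ with $\beta\le 1$ only makes the model norms smaller; a Gaussian hypercontractivity / Borell--TIS type concentration inequality for functionals living in a fixed finite sum of Wiener chaoses then converts a moment bound of the form $\E\normm{Z^{(m)}(\beta\xi);Z^{(m)}(0)}_{Q_L}^{p}\lesssim (\text{const})^p m^{-\nu' p}$ into a tail bound $\P(\normm{Z^{(m)}(\beta\xi);Z^{(m)}(0)}_{Q_L}>\tfrac12 K(m))\le C\exp(-c m^{\nu}/\beta^{2})$, using $K(m)\to\infty$. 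This is essentially~\cite[Prop.~4.1]{Lab19} with the $\beta$-dependence tracked; the rate $\beta^{-2}$ is exactly the rate of the Gaussian concentration once one rescales $\beta\xi$ to a standard white noise.

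For the second failure mode, positivity of $\Phi_{m,Q_L}(Z^{(m)}(\beta\xi),0)$, I would follow~\cite[Prop.~4.2 and Lem.~4.10]{Lab19}. Positivity of the resolvent operator $(-\Delta+\beta\xi+m)^{-1}$ at level $a=m$ is equivalent to $\cH(Q_L,\beta\xi)+m>0$, i.e.~to $\lambda(Q_L,\beta\xi)>-m$. Since $\lambda(Q_L,\beta\xi)\ge \beta^{?}\cdot(\text{something controlled by the model norm})$ — more precisely, the fixed-point construction yields a deterministic bound $\lambda(Q_L,\beta\xi)\ge -F\big(\normm{Z^{(m)}(\beta\xi)}_{Q_L}\big)$ for an explicit increasing function $F$, and on $\cM_{m,Q_L,K(m)}$ one has $\normm{Z^{(m)}(\beta\xi)}_{Q_L}\le K(m)+\normm{Z^{(m)}(0)}_{Q_L}$ — the only way positivity can fail while the model stays in $\cM_{m,Q_L,K(m)}$ is already excluded provided $m$ is taken large enough relative to $F(K(m))$; if instead one wants a bound uniform over how $K(m)$ was chosen, one re-runs the model tail bound with a smaller radius. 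Either way the bad event is contained (up to the first bad event already handled) in $\{\normm{Z^{(m)}(\beta\xi);Z^{(m)}(0)}_{Q_L}>c' K(m)\}$ for some fixed $c'>0$, and the same concentration estimate applies. Combining the two gives $\P((E_{m,L}(\beta))^\cc)\le C e^{-cm^{\nu}/\beta^2}$ with constants independent of $L\ge 1$ — the $L$-uniformity coming from the fact that the model bounds in~\cite{Lab19} are stated with constants that depend polynomially on the box, against which the exponential gain in $m$ wins; alternatively, and more cleanly, one invokes the construction on the full space and restricts, so that the relevant norm is over a fixed neighbourhood and genuinely $L$-independent.

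The main obstacle I anticipate is making the $m$-dependence of all constants completely explicit and checking that the exponent $\nu>0$ survives after the various chainings — in particular tracking how $K(m)$ (from Proposition~\ref{Prop:Resolv}) and the deterministic bound $F$ on $\lambda(Q_L,\cdot)$ both depend on $m$, and confirming that the moment estimates for the renormalised model decay in $m$ fast enough that $\exp(-c m^\nu/\beta^2)$ is genuinely achievable rather than merely $\exp(-c/\beta^2)$. The $L$-uniformity is a secondary but real subtlety: one must ensure the polynomial-in-$L$ prefactors from a union bound over sub-boxes of $Q_L$ (or from the global model norm over $Q_L$) do not destroy the estimate, which is why I would lean on the simultaneous full-space construction recorded in the event $\Omega'$ above rather than on box-by-box bounds. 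Everything else — the Gaussian concentration, the equivalence of positivity with a spectral bound, the monotonicity in $\beta$ — is routine given the machinery already cited from~\cite{Lab19} and~\cite{Hai14}.
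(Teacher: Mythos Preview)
Your treatment of the first failure mode is essentially the paper's: the paper invokes~\cite[Lemma~4.11]{Lab19}, which provides precisely the exponential integrability
\[
\sup_{m\ge 1}\sup_{L\ge 1}\frac{1}{L^d}\sum_{\tau}\E\Big[\exp\big(\lambda m^{\nu}\normm{\Pi^{(m)}\tau}_{\Lambda}^{2/\|\tau\|}\big)\Big]<\infty
\]
you are reaching for, and then tracks the $\beta$-dependence via the homogeneity relation $\Pi^{(m),\beta}_x\tau=\beta^{\|\tau\|}\Pi^{(m)}_x\tau$, which yields the $\beta^{-2}$ in the exponent after a Chernoff bound.

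The handling of positivity, however, differs and your proposed route has a gap. You want a deterministic a priori bound $\lambda(Q_L,\beta\xi)\ge -F\big(\normm{Z^{(m)}}_{Q_L}\big)$ and then argue that on the good model event positivity is automatic. But no such bound is established in the cited references, and there is a circularity: the eigenvalue $\lambda(Q_L,\beta\xi)$ is only defined once one knows the resolvent construction produces the inverse of a self-adjoint operator bounded below, which is part of what you are trying to prove. The paper sidesteps this entirely. It observes that if $Z^{(m')}(\beta\xi)\in\cM_{m',Q_L,\frac12 K(m')}$ for \emph{every} $m'\ge m$, then by Proposition~\ref{Prop:Resolv} the operator $(\cH(Q_L,\beta\xi)+a)$ is invertible for all $a\in(m-2,\infty)$, hence the spectrum of $\cH$ lies in $[-m+2,\infty)$ and positivity of the resolvent at level $m$ is automatic. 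Both failure modes are thus absorbed into the single union bound
\[
\P\big((E_{m,L}(\beta))^\cc\big)\le \sum_{m'\ge m}\P\Big(\normm{Z^{(m')};Z(0)}_{Q_L}\ge \tfrac12 K(m')\Big),
\]
each summand of which is controlled by the same exponential moment estimate. This is cleaner than your route and avoids having to produce or track any function $F$.

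On $L$-uniformity: your concern is legitimate --- the paper's final bound in fact carries an $L^d$ prefactor inherited from~\cite[Lemma~4.11]{Lab19}. But the lemma is only applied in the proof of Proposition~\ref{prop:PGD} with $L$ fixed, so this prefactor is harmless there.
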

\begin{proof}[Proof of Lemma \ref{lem:lab_4.10}]
	Suppose that $Z^{(m)} = Z^{(m)}(\beta \xi)\in \cM_{m,L,\frac12 K(m)}$, then the arguments in the proof of Proposition \ref{Prop:Construct} combined with Proposition \ref{Prop:Resolv} show that $(\cH(Q,\beta\xi) + a)$ is invertible for all $a \in (m -2,m+2)$. We thus deduce that if for every $m\ge m_0$, we have $Z^{(m)} \in \cM_{m,L,\frac12 K(m)}$, then $(\cH(Q,\beta\xi) + a)$ is invertible for all $a \in (m_0 -2, +\infty)$, thus implying that all these operators $(\cH(Q,\beta\xi) + a)^{-1}$ are positive, in particular $\Phi_{m,Q_L}(Z^{(m)},0)$.\\
	Now let $m_0 \in \N$ be arbitrary. We deduce that
	\begin{align*}
	\P\left((E_{m_0,L})^\cc\right) &\leq \P\left( \sup_{m \ge m_0} \normm{Z^{(m)} ; Z(0)}_{Q_L} \ge \frac12 K(m) \right) \leq \sum_{m \ge m_0} \P\left( \normm{Z^{(m)}; Z(0)}_{Q_L} \ge \frac12 K(m) \right)\;.
	\end{align*}
	Write $Z^{(m)}(\beta \xi) = (\Pi^{(m), \beta}, \Gamma^{(m), \beta})$. By~\cite[Lemma 2.3 and Lemma 3.1]{Lab19}, there exists a constant $C>0$ such that for all $m\ge 1$ we have
	\[ \P\left( \normm{Z^{(m)}; Z(0)}_{Q_L} \ge \frac12 K(m) \right) \le \P\left( \normm{\Pi^{(m), \beta}}_{\Lambda,Q_L} \ge \frac12 C K(m) \right)\;,\]
	where
	\[ \normm{\Pi^{(m), \beta}}_{\Lambda,Q_L} := \sup_{\zeta \in \cA_{<0}} \sup_{\tau \in \cA_\zeta} \normm{\Pi^{(m), \beta} \tau}_{\Lambda,Q_L}\;,\]
	and, for some scaling function $\varphi$ of a compactly supported wavelet basis
	\[ \normm{\Pi^{(m), \beta} \tau}_{\Lambda,Q_L} := \sup_{n\ge n_m} \sup_{x\in Q_L \cap (2^{-n}\Z^d)} \frac{\left|\crochet{\Pi^{(m), \beta}_x \tau, \varphi^{2^{-n}}_x}\right|}{\norm{\tau}_\zeta 2^{-n\zeta}}\;.\]
	Let $\Pi^{(m)} = \Pi^{(m),1}$ be the model associated to the noise $\xi$ with scaling factor $\beta=1$. We note that $\Pi^{(m),\beta}_x \tau = \beta^{\|\tau\|} \Pi^{(m)}_x \tau$, where $\|\tau\|$ is the number of occurrences of $\Xi$ in the symbol $\tau$. By~\cite[Lemma 4.11]{Lab19} there exist $\lambda,\nu > 0$ such that
	\[ M := \sup_{m \ge 1} \sup_{L \ge 1} \frac{1}{L^d} \sum_{\tau \in \cA_{<0}} \E\left[\exp\left(\lambda m^{\nu} \normm{\Pi^{(m)} \tau}_{\Lambda,Q_L}^{\frac{2}{\norm{\tau}}}\right) \right] < \infty\;.\]
	A simple computation thus yields the existence of a constant $\alpha >0$ such that for all $m\ge 1$ and all $L\ge 1$
	\[ \P\left( \normm{\Pi^{(m), \beta}}_{\Lambda,Q_L} \ge \frac12 C K(m) \right) \le M L^d e^{-\alpha \frac{m^\nu}{\beta^2}}\;,\]
	which concludes the proof upon summing for $m \ge m_0$.
\end{proof}

\begin{proof}[Proof of Proposition \ref{prop:PGD}]
	Fix any $m\ge 1$. On the event $E_{m,L}(\beta) \cap \Omega' \cap \tilde{\Omega}$ (recall the proof of Proposition \ref{Prop:Construct}), the operator $\Phi_{m,Q_L}(Z^{(m)}(\beta),0)$ coincides with the operator $(\cH(Q_L,\beta\xi) + m)^{-1}$ ; it is positive, self-adjoint and compact, implying that its spectrum consists of positive eigenvalues that we denote $(\mu_{n,L})_{n\ge 1}$ in non-increasing order. These eigenvalues can be related to those of $\cH(Q_L,\beta\xi)$, that we denote $(\lambda_{n,L})_{n\ge 1}$ in non-decreasing order, through:
	\[\lambda_{n,L} = (\mu_{n,L})^{-1} - m\;.\]
	Fix some $c\in \R$ and observe that $I_L((-\infty,c])$ and $I_L((-\infty,c))$ are finite.\\
	
	\noindent\emph{Upper bound.} Recall the constant $K=K(m)$ of Proposition \ref{Prop:Resolv}. On the set $\cM_{m,Q_L,K}$, we define the continuous mapping $\varphi_{m} : \cM_{m,Q_L,K} \to \R$ by $Z \mapsto \mu$ where $\mu$ is the supremum of the spectrum of the operator $\Phi_{m,L}(Z,0)$. The continuity of $\varphi_{m}$ is a consequence of the continuity of $\Phi_{m,L}$ and of the variational formula for the supremum of the spectrum. Take $m$ large enough such that $m+c > 0$, we can write\footnote{Implicitly any subset of $\cM_{m,Q_L,K}$ is viewed as a subset of $\cM_{m,Q_L}$: in particular, $\varphi_{m}^{-1}(A)$ for any set $A\subset \R$.}
	\begin{align*}
	\P\big(\lambda_{1,L} \le c ; E_{m,L}(\beta)\big) &= \P\big(\mu_{1,L} \ge \frac1{m+c} ; E_{m,L}(\beta)\big)\\
	&= \P\Big(Z^{(m)}(\beta \xi) \in \varphi_{m}^{-1}\Big(\big[\frac1{m+c},\infty\big)\Big) ; E_{m,L}(\beta)\Big)\;.
	\end{align*}
	Therefore
	\[ \P(\lambda_{1,L} \le c ) \le \P\Big(Z^{(m)}(\beta \xi) \in \varphi_{m}^{-1}\big([\frac1{m+c},\infty)\big)\Big) + \P\left(E_{m,L}(\beta)^\cc\right)\;.\]
	Assume that we can show that for all $m\ge 1$, we have
	\begin{equation}\label{Eq:ZmUpper}
	\limsup_{\beta \to 0} \beta^2 \log\P\Big(Z^{(m)}(\beta \xi) \in \varphi_{m}^{-1}\big([\frac1{m+c},\infty) \big) \Big) \le - I_L((-\infty,c])\;.
	\end{equation}
	Then by Lemma \ref{Lemma:E} we deduce that
	\begin{align*}
	\limsup_{\beta \to 0} \beta^2 \log \P(\lambda_{1,L} \le c) \leq \max\left\{- I_L((-\infty,c]), -cm^\nu \right\}.
	\end{align*}
	Choosing $m$ large enough so that $\alpha m^\nu >  I_{L}((-\infty,c])$, we obtain the desired upper bound of the large deviation estimate. We are left with proving \eqref{Eq:ZmUpper}.\\
	
	Since $\cM_{m,Q_L,K}$ is a closed subset of $\cM_{m,Q_L}$, $\varphi_{m}^{-1}\big([\frac1{m+c},\infty)\big)$ is itself a closed subset of $\cM_{m,Q_L}$. The large deviations principle stated in Theorem \ref{lem:PGD_modele} thus yields
	\[\limsup_{\beta \to 0} \beta^2 \log\P\Big(Z^{(m)}(\beta \xi) \in \varphi_{m}^{-1}\big([\frac1{m+c},\infty)\big) \Big) \le - I_\cM( \varphi_m^{-1}([\frac1{m+c},\infty)))\;.\]	
	At this point, observe that for any $V \in L^2(Q_L)$ such that $Z^{(m)}(V) \in \cM_{m, Q_L, K}$, the operator $(-\Delta + V +m)^{-1}$ is well-defined and $\mu = \varphi_m(Z^{(m)}(V))$ is its largest positive eigenvalue. A priori we do not know whether this operator is positive so that $\mu$ does not necessarily correspond to the smallest eigenvalue $\lambda_1(Q_L,V)$ of $-\Delta +V$ and we only have the inequality
	\[ \lambda_1(Q_L,V) \le \frac{1}{\mu} - m\;.\]
	As a consequence we find
	\begin{align*}
	 I_\cM\left(\varphi_{m}^{-1}([\frac1{m+c},\infty))\right) &= \inf\left\{ \frac{\norm{V}_{L^2(Q_L)}^2}{2} : Z^{(m)}(V) \in \cM_{m, Q_L, K},  \frac{1}{\mu} -m \le c\right\} \\
	&\ge \inf\left\{ \frac{\norm{V}_{L^2(Q_L)}^2}{2} : \lambda_1(Q_L,V) \le c\right\}\;.
	\end{align*}
	The above implies
	\[\limsup_{\beta \to 0} \beta^2 \log \P(\lambda_{1,L} \le c) \leq - I_{L}((-\infty,c])\;,\]
	as required.\\
	
	\noindent\emph{Lower bound.} We consider the \emph{open} subset of $\cM_{m,Q_L}$ defined by
	$$ \cM_{m,Q_L,K_-} := \left\{ Z\in \cM_{m,Q_L}: \normm{Z; Z^{(m)}(0)}_{Q_L} < K \right\}\;.$$
	with the constant $K=K(m)$ of Proposition \ref{Prop:Resolv}. On the set $\cM_{m,Q_L,K_-}$, we consider the continuous mapping $\varphi_{m} : \cM_{m,Q_L,K_-} \to \R$ by $Z \mapsto \mu$ where $\mu$ is the supremum of the spectrum of the operator $\Phi_{m,L}(Z,0)$. Here again $\varphi_m$ is continuous. Again take $m$ large enough such that $m+c > 0$. From similar arguments as before we have
	\[ \P(\lambda_{1,L} < c ) \ge \P\Big(Z^{(m)}(\beta \xi) \in \varphi_{m}^{-1}\big((\frac1{m+c},\infty)\big)\Big) - \P\left(E_{m,L}(\beta)^\cc\right)\;.\]
	Note that $\varphi_{m}^{-1}((\frac1{m+c},\infty))$ is an open set. Therefore the large deviations principle stated in Theorem \ref{lem:PGD_modele} yields
	\begin{equation*}
	\liminf_{\beta \to 0} \beta^2 \log \P\Big(Z^{(m)}(\beta \xi) \in \varphi_{m}^{-1}\big((\frac1{m+c},\infty)\big)\Big) \ge - I_{\cM}\left(\varphi_{m}^{-1}\big((\frac1{m+c},\infty)\big) \right)\;.
	\end{equation*}
	Note that
	\[ \inf I_\cM\left(\varphi_{m}^{-1}\big((\frac1{m+c},\infty)\big)\right) = \inf\left\{ \frac{\norm{V}_{L^2(Q_L)}^2}{2} : Z^{(m)}(V) \in \cM_{m, Q_L, K_-},  \frac{1}{\mu} - m < c\right\} \;.\]
	We would like to compare this quantity to $I_{L}\left((-\infty,c)\right)$: here again the operator $(-\Delta + V + m)^{-1}$ may not be positive so that the supremum of its spectrum may not be related to $\lambda_1(Q_L,V)$.\\
	Recall that $I_L((-\infty,c)) < \infty$ so for any given $\delta > 0$, there exists some function $V \in L^2$ such that $\lambda(Q_L, V) \in (-\infty,c)$ and such that $\norm{V}_{L^2}^2/2 \leq \inf I_{L}((-\infty,c)) + \delta$. Recall from Proposition \ref{Prop:Resolv} that the constant $K(m)$ increases to infinity as $m \to \infty$; hence for this particular $V$, by \eqref{Eq:BdCanonical} we can find $m$ sufficiently large such that $Z^{(m)}(V) \in \cM_{m, Q_L, K}$ and such that $\lambda_1(Q_L, V) + m > 0$. This implies $1/\mu - m = \lambda_1(Q_L, V) \in (-\infty,c)$ and thus proves that for $m$ large enough
	\[ I_\cM\left(\varphi_{m}^{-1}\big((\frac1{m+c},\infty)\big)\right) \le I_{L}((-\infty,c)) + \delta\;.\]
	On the other hand, by Lemma \ref{Lemma:E} we know that, provided $m$ is large enough, $\P\left(E_{m,L}(\beta)^\cc\right)$ is negligible compared to the term that we have just controlled and therefore
	\[ \liminf_\beta \beta^2 \log \P(\lambda < c) \ge - I_{L} ((-\infty,c)) - \delta \;,\]
	for any given $\delta > 0$. This suffices to conclude.
\end{proof}

\appendix

\section{Renormalisation constants}\label{Sec:Renorm}

We let $G$ be the Green's function of $-\Delta$, and $P^{(m)}$ be the Green's function of $-\Delta+m$, we refer to~\cite[Sec 3.1]{Lab19} for the expressions. Recall that $n_m$ is the smallest integer such that $2^{-n_m} \le 1/\sqrt m$.

\begin{lemma}
Fix $r>0$. For every $m\ge 1$, there exists a decomposition $P^{(m)} = P^{(m)}_+ + P^{(m)}_-$ such that:\begin{enumerate}
\item $P^{(m)}_+$ is supported in $B(0,2^{-n_m+1})$ and satisfies $P^{(m)}_+ = P^{(m)}$ on $B(0,2^{-n_m-1})$, while $P^{(m)}_-$ is $\cC^\infty$ and vanishes on $B(0,2^{-n_m-1})$.
\item For all $k\in \N^d$ such that $|k|< r$ we have $\int x^k P^{(m)}_+(x) dx = 0$.
\item There exists a constant $C>0$, independent of $m\ge 1$, such that for all $k\in \N^d$ such that $|k|< r$ we have
\[ |\partial^k P^{(m)}_+(x)| \le C | \partial^k G(x)|\;,\quad x\in\R^d\;.\]
\end{enumerate}
\label{lem:kernel}
\end{lemma}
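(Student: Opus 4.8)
The plan is to follow the classical recipe for splitting a singular convolution kernel into a compactly supported singular piece with prescribed vanishing moments plus a smooth remainder (as in \cite[Lem.~5.5]{Hai14}), the one extra difficulty being to keep every constant uniform in $m$. Write $\mu := 2^{-n_m}$; by the definition of $n_m$ one has $\tfrac12 m^{-1/2} < \mu \le m^{-1/2}$, and this comparability, together with the scaling identity $P^{(m)}(x) = m^{(d-2)/2}P^{(1)}(\sqrt m\,x)$ (the prefactor being $1$ when $d=2$), is what will make all bounds $m$-uniform. As a preliminary I would record that for $|x| \le \mu$ one has $|\partial^k P^{(m)}(x)| \le C_k |\partial^k G(x)|$ with $C_k$ independent of $m$: rescaling by $\sqrt m$ reduces this to the comparison of $\partial^k P^{(1)}$ with $\partial^k G$ on the unit ball, where both carry the same fundamental-solution singularity.

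For the construction, fix a smooth $\chi$ equal to $1$ on $B(0,1/2)$ and supported in $B(0,1)$, set $\chi_m(x) := \chi(2^{n_m}x)$ and $\tilde P_+ := \chi_m P^{(m)}$. This is smooth away from $0$, coincides with $P^{(m)}$ on $B(0,2^{-n_m-1})$, is supported in $B(0,\mu)$, and a Leibniz expansion combined with $|\partial^j\chi_m| \lesssim \mu^{-|j|}$, the preliminary bound, and the known behaviour of $|\partial^{k-j}G(x)|$ near $0$ gives $|\partial^k \tilde P_+(x)| \le C|\partial^k G(x)|$ for $|k| < r$. Since $\tilde P_+$ need not have vanishing moments, I would correct it on the annulus $A_m := \{\mu \le |x| \le 2\mu\} \subset B(0,2^{-n_m+1})$, which is disjoint from $B(0,2^{-n_m-1})$ and hence harmless for the coincidence property. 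Pick, in the unit configuration, smooth functions $(g_l)_{|l|<r}$ supported in $B(0,2)\setminus B(0,1)$ with $\int x^{l'} g_l(x)\,dx = \delta_{l,l'}$ (possible since the monomials of degree $<r$ are linearly independent on the annulus, so their Gram matrix is invertible), and set $g_l^\mu(x) := \mu^{-d-|l|} g_l(x/\mu)$; a change of variables shows $\int x^{l'} g_l^\mu(x)\,dx = \delta_{l,l'}$ at every scale. Then
\[ \psi_m := -\sum_{|l|<r}\Big(\int y^l \tilde P_+(y)\,dy\Big) g_l^\mu\,,\quad P^{(m)}_+ := \tilde P_+ + \psi_m\,,\quad P^{(m)}_- := P^{(m)} - P^{(m)}_+\,, \]
and by construction $P^{(m)}_+$ satisfies property~2, while property~1 (support of $P^{(m)}_+$, coincidence with $P^{(m)}$ on $B(0,2^{-n_m-1})$, and smoothness of $P^{(m)}_-$, which vanishes identically near $0$) is immediate.

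The substantive step is property~3, i.e.~$|\partial^k\psi_m| \le C|\partial^k G|$ on $A_m$ with $C$ independent of $m$. Two ingredients are needed: $|\partial^k g_l^\mu(x)| \lesssim \mu^{-d-|l|-|k|}\un_{A_m}(x)$, straight from the definition; and the moments of $\tilde P_+$ are of the expected size, $\big|\int y^l \tilde P_+(y)\,dy\big| \lesssim \mu^{|l|+2}$, obtained by rescaling by $\sqrt m$, using $\sqrt m\,\mu \asymp 1$ and $\int_{B(0,c)}|z|^{|l|}|P^{(1)}(z)|\,dz < \infty$ — crucially with no logarithmic loss, the potential $\log$ being cancelled by the $\log m$ sitting inside $P^{(m)}$ at this scale precisely because $\mu \asymp m^{-1/2}$. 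Combining, $|\partial^k\psi_m(x)| \lesssim \mu^{2-d-|k|}\un_{A_m}(x) \asymp |x|^{-(d-2)-|k|}\un_{A_m}(x) \lesssim |\partial^k G(x)|$ for $|k|<r$, which with the previous step yields property~3. I expect this last estimate to be the main obstacle: everything rests on the uniformity in $m$, which is bought exactly by the cutoff scale $2^{-n_m}$ being comparable to $m^{-1/2}$, letting each bound be reduced via $P^{(m)}(\cdot) = m^{(d-2)/2}P^{(1)}(\sqrt m\,\cdot)$ to a fixed estimate at $m=1$, and in particular ruling out a spurious $\log m$ in the moment bounds when $d=2$.
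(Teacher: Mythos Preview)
Your construction is correct and is essentially the same argument the paper invokes by citing \cite[Lemma~3.1]{Lab19}: cut off $P^{(m)}$ at scale $2^{-n_m}$, then add smooth moment-correcting functions supported on an adjacent annulus; the only tweak the paper mentions, namely forcing the correcting functions $\eta_k$ to live in an annulus away from the origin, is exactly your choice of the $g_l^\mu$ supported in $\{\mu\le |x|\le 2\mu\}$, which preserves the coincidence $P^{(m)}_+=P^{(m)}$ on $B(0,2^{-n_m-1})$. Your explicit use of the scaling $P^{(m)}(x)=m^{(d-2)/2}P^{(1)}(\sqrt m\,x)$ together with $\sqrt m\,\mu\asymp 1$ is precisely how \cite{Lab19} obtains the $m$-uniform constants, so the two proofs differ only in level of detail, not in substance.
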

\begin{proof}
This is a consequence of~\cite[Lemma 3.1]{Lab19} except for the property ``$P^{(m)}_+ = P^{(m)}$ on $B(0,2^{-n_m-1})$'' that was not stated there. However, this property follows if one picks carefully the functions $\eta_k$ in that proof: namely, it suffices to impose to the functions $\eta_k$ to be supported in $B(0,1) \backslash B(0,1/2)$. This can always be achieved, see for instance~\cite[Lemma 8.1]{CarZam}.
\end{proof}

We introduce the renormalisation constants as follows. In dimension $1$, we set $C_\eps^{(m)}(\beta) := 0$. In dimension $2$, we set
$$ C_\eps^{(m)}(\beta) := \beta^2 \int_{\R^2} P_+^{(m)}(x) \rho_\eps^{*2}(x) dx\;.$$
A computation shows that there exists a constant $\tilde{c}_\rho(m)$ independent of $\beta$ such that $C_\eps^{(m)}(\beta) = \beta^2 (2\pi)^{-1} \ln \eps^{-1} + \beta^2 \tilde{c}_\rho(m) + o(1)$ as $\eps\downarrow 0$.\\
In dimension $3$, we set $C_\eps^{(m)}(\beta) := \beta^2 c_\eps^{(m)} + \beta^4 c_\eps^{(m),1,1} + \beta^4 c_\eps^{(m),1,2}$ where
\begin{align*}
c_\epsilon^{(m)} &:= \int P_+^{(m)}(x) \rho_\eps^{*2}(x) \,dx\;,\\
c^{(m),1,1}_\epsilon &:= \iiint P_+^{(m)}(x_1) P_+^{(m)}(x_2) P_+^{(m)}(x_3) \rho_\eps^{*2}(x_1+x_2)\rho_\eps^{*2}(x_2+x_3) \,dx_1 \,dx_2 \,dx_3\;,\\
c^{(m),1,2}_\epsilon &:= \iiint P_+^{(m)}(x_1) P_+^{(m)}(x_2) \big(P_+^{(m)}(x_3) \rho_\eps^{*2}(x_3) - c_\epsilon \delta_0(x_3) \big)\rho_\eps^{*2}(x_1+x_2+x_3) \,dx_1 \,dx_2 \,dx_3\;.
\end{align*}
There exist some constants $c_\rho, \tilde{c}_\rho, \tilde{c}_\rho^{1,1}(m),c_\rho^{1,2}$ independent of $\beta$ such that as $\eps\downarrow 0$
\begin{align*}
c_\epsilon^{(m)} &= \frac{c_\rho}{\eps} + \tilde{c}_\rho \sqrt{m} + o(1)\;,\\
c^{(m),1,1}_\epsilon &= \ln \frac1{\eps} + \tilde{c}_\rho^{1,1}(m) + o(1)\;,\\
c^{(m),1,2}_\epsilon &= c_\rho^{1,2} + o(1)\;.
\end{align*}
Note that the only constant that depends on $m$ is $\tilde{c}_\rho^{1,1}(m)$, and its expression is a bit involved so we refrain from writing it explicitly. On the other hand, if we let $G(x) = \frac1{4\pi |x|}$ (which is nothing but the Green's function of $-\Delta$), we have $c_\rho = \int_{\R^3} G(y) \rho^{*2}(y) dy$, $\tilde{c}_\rho =- \int_{\R^3} G(y)|y| \rho^{*2}(y) dy$, and
\[ c_\rho^{1,2} = \iiint G(y_1) G(y_3)\left( G(z_2 - y_3) - G(z_2) - \crochet{\nabla G(z_2), y_3} \right) \rho^{*2}(y_3) \rho^{*2}(y_1 + z_2) ~dy_1 dz_2 dy_3.\]

The construction of the renormalised model $Z_\eps^{(m)}(\beta)$ follows along the lines of~\cite{HaiPar} and~\cite{Lab19}. However, we take slightly different renormalisation constants compared to~\cite{Lab19}: instead of taking the constants built from the kernel $P^{(m)}_+$, we take those associated to $P^{(1)}_+$. Namely, in dimension $2$, we take $C_\eps := C_\eps^{(1)}$ and in dimension $3$, we take the three constants $c_\eps^{(1)}$, $c_\eps^{(1),1,1}$ and $c_\eps^{(1),1,2}$. This produces a limiting renormalised model that differs from the one in~\cite{Lab19} by \emph{finite} constants, as shown by the above asymptotics: this does not modify the final operator, but greatly simplify its definition (in particular, one does not need to deal with constants like $C^{(m)-(1)}$ as in~\cite{Lab19}).\\ 

Let us finally mention that the renormalised model satisfies for $d=2$
\[ \Pi_x^{(m),\eps}(\beta) \Xi \cI(\Xi)(x) = - C_\eps(\beta)\;,\]
and for $d=3$
\[ \Pi_x^{(m),\eps}(\beta) \Xi \cI(\Xi)(x) = - c_\eps^{(1)}(\beta)\;,\quad \Pi_x^{(m),\eps}(\beta) \Xi \cI(\Xi \cI(\Xi \cI(\Xi)))(x) = - c_\eps^{(1),1,1}(\beta) - c_\eps^{(1),1,2}(\beta)\;.\]

\section{Canonical model}\label{Appendix:Canonical}
The aim of this section is to construct a canonical admissible model $Z^{(m)}(V) := (\Pi^{(m)}_x(V), \Gamma^{(m)}_{xy}(V))$ associated to some potential function $V\in L^2_{\tiny\mbox{loc}}(\R^d)$ and to show that
\begin{equation}\label{Eq:BdCanonical}
\sup_{m\ge 1} \normm{Z^{(m)}(V);Z^{(m)}(0)}_Q < \infty\;.
\end{equation}
for all box $Q \subset \R^d$. If $V$ were smooth, then \cite[Prop. 8.27]{Hai14} would ensure that the model is admissible. Moreover, once the model is defined, \eqref{Eq:BdCanonical} essentially follows from Lemma \ref{lem:kernel} since the kernel $P^{(m)}_+$ is controlled by $G$ uniformly over all $m$. However, here $V$ is only in $L^2_{\tiny\mbox{loc}}(\R^d)$, which necessitates some adjustments in order to obtain the required analytical bounds. In the sequel, we fix $V \in L^2_{\tiny\mbox{loc}}(\R^d)$.

The set of symbols $\cT = \cU \cup \cF$ introduced in Subsection \ref{Subsec:RS} can be obtained through a recursive construction: let $\cU_0 = \{\un, X^k \}$, and for $n\ge 0$ we define recursively
\begin{align*}
\cF_n &:= \{\Xi \tau: \tau \in \cU_n\}\;,\\
\cU_{n+1} &:= \{\un, X^k\} \cup \{\cI(\tau): \tau \in \cF_{n}, |\tau|+2 < \gamma \}\;,
\end{align*}
for $\gamma = 2 - 4\kappa$. Subsequently we have $\cF = \bigcup_{n \ge 0} \cF_n$ and $\cU = \bigcup_{n\ge 0} \cU_n$. Note that all elements in $\cU$ are of positive homogeneity.

Given this recursive structure, we can define the model $\Pi^{(m)}$ in the following manner: for $m \ge 1$, $x,y \in \R^d$, we set
\[\Pi^{(m)}_x \un (y) = 1,\quad \Pi^{(m)}_x X^k(y) = (y - x)^k,\quad \Pi^{(m)}_x \Xi(y) = V(y) \]
and then recursively
\begin{align*}
\Pi^{(m)}_x (\tau \Xi)(y) &= (\Pi^{(m)}_x \tau)(y) \cdot V(y), \\
\Pi^{(m)}_x (\cI \bar{\tau})(y) &= \int P^{(m)}_+(y - z) \Pi^{(m)}_x \bar{\tau}(z) dz - \sum_{|k| < |\bar{\tau}| + 2} \frac{(y - x)^k}{k!} \int D^k P^{(m)}_+(x - z) \Pi^{(m)}_x \bar{\tau}(z) dz
\end{align*}
for all $\tau \in \cU$ and $\bar{\tau} \in \cF$. Since $V$ is a function, all these expressions are well-defined. 

We need the following estimate, that follows from standard arguments based on Lemma \ref{lem:kernel}. If $f_x$ is a function that satisfies
\[ |f_x(y)| \lesssim |x-y|^\zeta\;,\]
uniformly over all $y\in \R^d$ such that $|x-y| \le C$, then
\[ \left|\int P^{(m)}_+(y - z) V(z) f_x(z) dz - \sum_{|k| < |\tau| + 2} \frac{(y - x)^k}{k!} \int D^k P^{(m)}_+(x - z) V(z)f_x(z) dz\right| \lesssim |x-y|^{\zeta+2-\frac{d}{2}}\;,\]
uniformly over all $y\in\R^d$ such that $|x-y|\le C-1$ and over all $m\ge 1$.

Let us now prove recursively the required analytical bounds on $\Pi^{(m)}$ for some fixed box $Q$. Pick a large constant $C>0$. Suppose that for all $\tau \in \cU_n$, we have $|\Pi^{(m)}_x \tau (y)| \lesssim |y - x|^{|\tau|}$ uniformly over all $x \in Q$ and all $y\in\R^d$ such that $|x-y|\le C$ and over all $m\ge 1$. For any $\tau \in \cU_n$ we have, by the Cauchy-Schwarz inequality
$$ \big|\langle \Pi_x^{(m)} \Xi\tau,\varphi_x^\lambda\rangle\big| \lesssim \lambda^{|\tau|} \int |V(y)| |\varphi_x^\lambda(y)| dy \le \lambda^{|\tau|} \|V\, \un_{B(x,C)} \|_{L^2} \lambda^{-\frac{d}{2}} \lesssim \lambda^{|\tau\Xi|}\;,$$
uniformly over all $x \in Q$, all $m\ge 1$, all $\varphi\in \ccB^r$ and al $\lambda \in (0,C]$. Furthermore, by the estimate above
$$ \big|\Pi_x \cI(\Xi\tau)(y)\big| \le |x-y|^{|\tau|+2 - \frac{d}{2}}\;,$$
uniformly over all $x \in Q$ and all $y\in\R^d$ such that $|x-y|\le C-1$ and over all $m\ge 1$.

Since only finitely many iterations suffice to exhaust the whole set $\cU\cup\cF$, we deduce that
\begin{align*}
	\left| \crochet{\Pi^{(m)}_x \tau, \varphi^\lambda_x} \right| \lesssim \lambda^{|\tau|}
\end{align*}
uniformly over all $x\in Q$, all $m\ge 1$, all $\varphi\in \ccB^r$, all $\lambda \in (0,1]$ and all $\tau \in \cU\cup \cF$.

Regarding the construction of $\Gamma^{(m)}$, we argue that it is uniquely determined once $\Pi^{(m)}$ is specified on the negative levels of the regularity structure, see for example \cite[Thm. 2.10]{HW14}. Now that the model $Z^{(m)}(V)$ is defined with respect to $V \in L^2_{\tiny\mbox{loc}}$ and that the bound on $\normm{\Pi^{(m)}}_Q$ is independent of $m$, we can invoke \cite[Lem. 2.3]{Lab19} to conclude $\normm{Z^{(m)}(V); Z^{(m)}(0)}_Q$ is bounded by a constant independent of $m$, whence \eqref{Eq:BdCanonical} follows.

\section{Proof of Proposition \ref{prop:boites}}\label{sec:boites}

 The goal of this subsection is to prove Proposition \ref{prop:boites}, which is basically an extension of a result by G\"artner and K\"onig \cite{GK00} where they have considered the case of smooth bounded potential. We begin by proving a variation to their Proposition 1 in \cite{GK00}.

\begin{proposition}
For fixed $L > r > 0$ and any bounded smooth potential $V$, there exists a constant $K > 0$ such that
\[ \lambda(Q_L, V) \geq \min_{k \in \Z^d : |k|_\infty \leq \frac{L}{2r} + \frac{3}{4} } \lambda(rk + Q_{3r/2}, V) - \frac{K}{r^2}. \]
\label{prop:GK00}
\end{proposition}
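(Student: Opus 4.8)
The plan is to run the classical G\"artner--K\"onig argument, namely an IMS-type localisation of the Rayleigh quotient of the principal eigenfunction of $-\Delta+V$ on $Q_L$ onto the overlapping sub-boxes $rk+Q_{3r/2}$. Since $Q_L$ is bounded and $V$ is bounded, $-\Delta+V$ with Dirichlet b.c.\ has a principal eigenvalue $\lambda(Q_L,V)$ with a normalised eigenfunction $\varphi\in H^1_0(Q_L)$, so that $\lambda(Q_L,V)=\int\big(|\nabla\varphi|^2+V\varphi^2\big)$, where $\varphi$ is extended by zero to $\R^d$ (this extension lies in $H^1(\R^d)$ and the integrand is supported in $\overline{Q_L}$).

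First I would build a \emph{quadratic} partition of unity at scale $r$ subordinate to the cover $\{rk+Q_{3r/2}\}_{k\in\Z^d}$. Fix a non-negative $\psi\in C_c^\infty(Q_{3/2})$ with $\sum_{j\in\Z^d}\psi(\cdot-j)>0$ everywhere; then $S:=\sum_{j\in\Z^d}\psi(\cdot-j)^2$ is smooth, $\Z^d$-periodic and bounded below by a positive constant, and $\phi:=\psi/\sqrt S$ satisfies $\phi\in C_c^\infty(Q_{3/2})$, $\phi\ge0$ and $\sum_{k\in\Z^d}\phi(\cdot-k)^2\equiv1$ (the normalising factor $S(\cdot-k)=S$ is independent of $k$). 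Setting $\chi_k(x):=\phi(x/r-k)$ yields smooth functions supported in $rk+Q_{3r/2}$ with $\sum_k\chi_k^2\equiv1$ and $|\nabla\chi_k|\le r^{-1}\|\nabla\phi\|_\infty$; since at most $2^d$ of these boxes overlap at any given point (in each coordinate an interval of length $3/2$ contains at most two integers), one obtains the pointwise bound $\sum_k|\nabla\chi_k|^2\le K r^{-2}$ with $K:=2^d\|\nabla\phi\|_\infty^2$, \emph{uniformly} in the point, in $r$ and in $L$. Moreover, for $x\in Q_L$ the only indices with $\chi_k(x)\neq0$ satisfy $|k|_\infty\le L/(2r)+3/4$, so one may restrict all sums below to those $k$ without altering $\sum_k\chi_k^2=1$ on $Q_L$.

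Next I would apply the IMS localisation formula. Expanding $|\nabla(\chi_k\varphi)|^2$ and summing over $k$, the cross terms vanish because $\sum_k\chi_k\nabla\chi_k=\tfrac12\nabla\big(\sum_k\chi_k^2\big)=0$, giving $\sum_k|\nabla(\chi_k\varphi)|^2=|\nabla\varphi|^2+\varphi^2\sum_k|\nabla\chi_k|^2$. Combined with $\sum_k\chi_k^2=1$ this produces
\[ \lambda(Q_L,V)=\sum_k\int\big(|\nabla(\chi_k\varphi)|^2+V(\chi_k\varphi)^2\big)-\int\varphi^2\sum_k|\nabla\chi_k|^2\,. \]
Each $\chi_k\varphi$ has support a compact subset of the open box $rk+Q_{3r/2}$ (intersected with $\overline{Q_L}$) and belongs to $H^1_0(rk+Q_{3r/2})$, so by the variational characterisation of the principal eigenvalue $\int(|\nabla(\chi_k\varphi)|^2+V(\chi_k\varphi)^2)\ge\lambda(rk+Q_{3r/2},V)\,\|\chi_k\varphi\|_{L^2}^2\ge\mu\,\|\chi_k\varphi\|_{L^2}^2$, where $\mu:=\min_{|k|_\infty\le L/(2r)+3/4}\lambda(rk+Q_{3r/2},V)$ and the last step uses $\|\chi_k\varphi\|_{L^2}^2\ge0$. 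Summing over $k$ and using $\sum_k\|\chi_k\varphi\|_{L^2}^2=\|\varphi\|_{L^2}^2=1$ together with $\int\varphi^2\sum_k|\nabla\chi_k|^2\le K/r^2$ then yields $\lambda(Q_L,V)\ge\mu-K/r^2$, which is the assertion.

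I do not expect a genuine obstacle here: the argument is essentially bookkeeping around the IMS identity and the Rayleigh comparison. The two points requiring a little care are the uniform-in-$(r,L)$ gradient estimate $\sum_k|\nabla\chi_k|^2\lesssim r^{-2}$, which is precisely why one constructs a single $\phi$ at unit scale and rescales, and the treatment of the boundary boxes $rk+Q_{3r/2}$ that stick out of $Q_L$: the localised functions remain legitimate Dirichlet test functions on those boxes, and since $V$ is globally defined the quantities $\lambda(rk+Q_{3r/2},V)$ make sense.
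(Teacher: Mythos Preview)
Your argument is correct and follows essentially the same route as the paper: both build a quadratic partition of unity at scale $r$ subordinate to the cover $\{rk+Q_{3r/2}\}$, apply the IMS identity $\sum_k|\nabla(\chi_k\varphi)|^2=|\nabla\varphi|^2+\varphi^2\sum_k|\nabla\chi_k|^2$, and compare each localised piece with the Dirichlet ground state on its box. The only cosmetic differences are that the paper gives an explicit one-dimensional construction of the partition (via $\sqrt{\varphi(2(r+2x)/r)\varphi(2(r-2x)/r)}$ with $\varphi$ a smoothed step function) and works with arbitrary test functions $\psi\in C_c^\infty(Q_L)$ before taking the infimum, whereas you normalise $\psi/\sqrt{S}$ abstractly and plug in the principal eigenfunction directly.
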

\begin{proof}
The proof is built upon a specific choice of partition of unity: Let $\eta: \R^d \to [0, 1]$ be a smooth function supported in $Q_{3r/2}$ such that it gives $1$ on $Q_{r/2}$, $\sum_{k \in \Z^d} \eta_k^2(x) = 1$ and that $\sum_{k \in \Z^d} |\nabla \eta_k|^2(x) \leq K/r^2$ for all $x \in \R^d$, where $\eta_k(x) := \eta(r k + x)$. We will give a construction of such partition later in the proof.

Note first that we have the following variational formulation for the principal eigenvalue of the operator $-\Delta + V$ on a domain $D \subset \R^d$:
\[ \lambda(D, V) = \inf_{\substack{\psi \in C^\infty_c(D) \\ \|\psi\|_{L^2} = 1}} \int_{\R^d} |\nabla \psi|^2 + V \psi^2 =: \inf_{ \|\psi\|_{L^2} = 1} G^V(\psi). \]
Given the desired partition of unity $(\eta_k)$, we take $\psi \in C^\infty_c(Q_L)$ such that $\|\psi\|_{L^2} = 1$ and set $\psi_k = \eta_k \psi$. With the fact that $|\nabla \psi_k|^2 = \eta_k^2 |\nabla \psi|^2 + \psi^2 |\nabla \eta_k|^2 + \nabla (\eta_k^2) \cdot \nabla (\psi^2) /2$, it follows that $\sum_k |\nabla \psi_k|^2 = |\nabla \psi|^2 + \sum_k |\nabla \eta_k|^2 \psi^2$. Therefore
\begin{align*}
\sum_{k \in \Z^d} \|\psi_k\|_{L^2}^2 G^V(\psi_k/\|\psi_k\|_{L^2})
= \sum_{k \in \Z^d} \int_{\R^d} \left(|\nabla \psi_k|^2 + V \psi_k^2\right)
\leq G^{V}(\psi) + \frac{K}{r^2}
\end{align*}
where we have used the property $\sup_{x \in \R^d} \sum_{k} |\nabla \eta_k|^2(x) \leq K/r^2$ in the last inequality.
Since $\psi$ is supported in $Q_L$, the sum over $k$ is in fact a finite sum as we can restrict ourselves to those $k$'s such that $r|k|_\infty - 3r/4 < L/2$. Hence
\[G^{V} (\psi) + \frac{K}{r^2} \geq \sum_{|k|_\infty < \frac{L}{2r} + \frac{3}{4}} \|\psi_k\|_{L^2}^2 \min_{k \in \Z^d : |k|_\infty < \frac{L}{2r} + \frac{3}{4}  } \lambda(Q_{kr+3r/2}, V) = \min_{|k|_\infty < \frac{L}{2r} + \frac{3}{4} } \lambda(Q_{kr+3r/2}, V).\]
We then have our desired inequality by taking an infimum over $\psi$.

Finally, we finish this proof by constructing the function $\eta$ with desired properties. The $d$-dimensional construction can be first reduced to a $1$-dimensional one by setting $\eta(x) = \zeta(x_1) \dots \zeta(x_d)$ for $x = (x_1, \dots, x_d) \in \R^d$ with $\zeta$ being the $1$-dimensional version of $\eta$. On the other hand $\zeta$ can be constructed as follows. Let $\varphi(x) := c \int_{-\infty}^x e^{-1/(1 - u^2)} \un_{|u| < 1}$, where the constant $c$ is chosen so that $\varphi(x) = 1$ for $x \ge 1$. Note that $\varphi(x) = 0$ for $x\le -1$ and that $\varphi(x)+\varphi(-x) = 1$ for all $x\in\R$. One can also verify that $\sqrt{\varphi}$ is smooth.
Now set
\[\zeta(x) = \sqrt{\varphi(2(r + 2x)/r) \varphi(2(r - 2x)/r)}.\]
One can see that $\zeta(x) = 1$ if $|x| \le r/4$, $\zeta(x) = 0$ if $|x| > 3r/4$ and $\sum_k \zeta^2(rk + x) = 1$ for all $x$. Moreover, since the function $\varphi$ is independent of $r$, we have $\|\nabla \eta\|_{\infty} \lesssim 1/r$ with the proportionality constant depending only on the function $\varphi$, thus giving the bound $\sup_{x \in \R^d} \sum_{k} |\nabla \eta_k|^2(x) \leq K/r^2$ for some constant $K > 0$.
\end{proof}

\begin{proof}[Proof of Proposition \ref{prop:boites}]
To prove Proposition \ref{prop:boites}, we first consider the same assertions with $\xi$ replaced by the mollified and renormalized white noise $\xi_\eps$. In this case, the lower bound of (\ref{eq:boxes}) follows from Proposition \ref{prop:GK00}, while the remaining assertions are consequences of the variational formuation of eigenvalues
\[\lambda_n(D, V) := \inf_{\substack{F \sqsubset C^\infty_c(D)\\\mathrm{dim}(F) = n}} \sup_{\substack{\psi \in F\\\norm{\psi}_{L^2} = 1}} G^V(\psi)\]
(where the functional $G^V$ is defined in the proof of Lemma \ref{prop:GK00}) for any domain $D \subset \R^d$ and bounded smooth potential $V$.

Assertions being established for all smooth bounded $V$, now it remains to take $V_\eps = \beta\xi_\eps + C_\eps(\beta)$ and to pass to the limit. By Proposition \ref{Prop:Construct}, the eigenvalues of the renormalized Hamiltonian $\cH(Q_L,\beta \xi_{\eps_k} + C_{\eps_k}(\beta))$ converge almost surely to those of $\cH(Q_L,\beta \xi)$, which implies immediately the desired almost sure inequality (\ref{eq:boxes}).
\end{proof}

\endappendix

\bibliographystyle{Martin}
\bibliography{ref}
\section*{Data Availability}
Data sharing not applicable to this article as no datasets were generated or analysed during the current study.

\end{document}